\newtheorem{theoremalph}{Theorem}
\newtheorem{Theorem}{Theorem}[section]
\newtheorem*{Theorem A}{Theorem A}
\newtheorem{Definition}[Theorem]{Definition}
\newtheorem{Proposition}[Theorem]{Proposition}
\newtheorem{Lemma}[Theorem]{Lemma}
\newtheorem*{Remark}{Remark}
\newtheorem{Corollary}[Theorem]{Corollary}
\newtheorem*{Claim}{Claim}
\newtheorem*{Acknowledgements}{Acknowledgements}
 \def\NN{{\mathbb N}} 
 \def\RR{{\mathbb R}}
 \def\ZZ{{\mathbb Z}}
\def\dim{\operatorname{dim}}
\begin{document}

\title{On Pesin's entropy formula for dominated splittings without mixed behavior}

\author{Dawei Yang\and Yongluo Cao\footnote{D. Yang and Y. Cao would like to thank the support of NSFC 11125103, NSFC 11271152 and A Project Funded by the Priority Academic Program Development of Jiangsu Higher Education Institutions(PAPD). Y. Cao is the corresponding author.}}

\date{}

\maketitle

\begin{abstract}
For $C^1$ diffeomorphisms, we prove that  the Pesin's entropy formula holds for some invariant measure supported on any topological attractor that admits a dominated splitting without mixed behavior. We also prove Shub's entropy conjecture for diffeomorphisms having such kind of splittings.

\end{abstract}

\section{Introduction}

Pesin's entropy formula characterize the relationship between the metric entropy and Lyapunov exponents: the metric entropy is the integration of the sum of positive Lyapunov exponents. Sometimes, a measure that satisfies the Pesin's entropy formula is called an \emph{SRB} measure when there is at least one positive Lyapunov exponent. We would like to know the existence of measures that satisfy the entropy formula for a given system. Lots of results were got for $C^2$ maps. Since the absence of distortion bounds, we lose some method to get SRB measures for $C^1$ maps. However, there are results for $C^1$ maps. See \cite{CaQ01,CCE15,Qiu11,SuT12,Tah02} for instance.

In this paper, we consider a topogical attractor which admits a dominated splitting without mixed behavior. We show the existence of measures  satisfying Pesin's entropy formula for this kind of systems. Such a splitting is satisfied in some natural setting, for instance, if a non-periodic transitive set of a surface diffeomorphism has a non-trivial dominated splitting, then this dominated splitting has no mixed behavior.

Let $f$ be a diffeomorphism on a manifold $M$ whose dimension is $d$. For a compact invariant set $\Lambda$, one says that $\Lambda$ admits a \emph{dominated splitting} if there is a continuous invariant splitting $T_\Lambda M=E\oplus F$, and constants $C>0$ and $\lambda\in(0,1)$ such that for any $x\in\Lambda$, and $n\in\NN$, any $u\in E(x)\setminus\{0\}$ and any $v\in F(x)\setminus \{0\}$, we have
$$\frac{\|Df^n(u)\|}{\|u\|}\le C\lambda^n\frac{\|Df^n(v)\|}{\|v\|}.$$

We say a dominated splitting $T_\Lambda M=E\oplus F$ has \emph{no mixed behavior} if for any measure $\mu$ supported on $\Lambda$, every Lyapunov exponent of $\mu$ along $E$ is non-positive and every Lyapunov exponent of $\mu$ along $F$ is non-negative. Equivalently, we have that 
$$\liminf_{n\to\infty}\frac{1}{n}\log\|Df^n|_{E(x)}\|\le 0,~~~\liminf_{n\to\infty}\frac{1}{n}\log\|Df^{-n}|_{F(x)}\|\le 0,~~~\forall x\in\Lambda.$$

\begin{theoremalph}\label{Thm:formula}

For a $C^1$ diffeomorphism $f$, if an attractor $\Lambda$ admits a dominated splitting $T_\Lambda M=E\oplus F$ without mixed behavior, then there is a measure $\mu$ supported on $\Lambda$ satisfying Pesin's entropy formula.

\end{theoremalph}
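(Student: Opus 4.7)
Define $\psi(x)=\log|\det(Df|_{F(x)})|$, which is continuous on $\Lambda$. By Oseledets' theorem together with the no-mixed-behavior hypothesis, for every $f$-invariant probability $\mu$ supported on $\Lambda$ the Lyapunov exponents along $E$ are non-positive, those along $F$ are non-negative, and so
$$\sum_{\chi_i(\mu)>0}\chi_i(\mu)\;=\;\int\psi\,d\mu.$$
Ruelle's inequality then gives the uniform a priori bound $h_\mu(f)\le\int\psi\,d\mu$ on $\Lambda$, so the topological pressure
$$P(f|_\Lambda,-\psi)\;:=\;\sup_\mu\bigl(h_\mu(f)-\int\psi\,d\mu\bigr)$$
is already $\le 0$; the theorem will follow if I can show that this supremum equals $0$ and is attained.

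My plan is to split this into two halves: (i) the lower bound $P(f|_\Lambda,-\psi)\ge 0$, and (ii) the supremum is attained. For (ii), continuity of $\psi$ makes $\mu\mapsto\int\psi\,d\mu$ continuous on the weak-$\ast$-compact space of invariant measures on $\Lambda$, so upper semicontinuity of $\mu\mapsto h_\mu(f)-\int\psi\,d\mu$ reduces to upper semicontinuity of the metric entropy. The dominated splitting together with the pointwise estimates
$$\liminf_n\tfrac{1}{n}\log\|Df^n|_{E(x)}\|\le 0,\qquad\liminf_n\tfrac{1}{n}\log\|Df^{-n}|_{F(x)}\|\le 0,\quad x\in\Lambda,$$
built into the no-mixed-behavior hypothesis prevent either subbundle from carrying exponential local complexity, so the topological tail entropy of $f|_\Lambda$ vanishes and Misiurewicz's theorem delivers the required upper semicontinuity.

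For (i) I would exploit the attractor structure. Fix a trapping neighbourhood $U$ with $\overline{f(U)}\subset U$ and $\Lambda=\bigcap_{n\ge 0}f^n(U)$, together with a narrow invariant $F$-cone-field on $U$ coming from the domination. Using the cone-estimates and the no-mixed-behavior hypothesis, each Bowen ball has Lebesgue measure
$$m(B_n(x,\e))\;\lesssim\;\e^{\dim M}\,e^{-S_n\psi(x)}$$
at sufficiently small scales. Covering $U$ by the Bowen balls of a maximal $(n,\e)$-separated set $E_n\subset U$ yields $m(U)\le\sum_{x\in E_n}m(B_n(x,\e))\lesssim\e^{\dim M}\sum_{x\in E_n}e^{-S_n\psi(x)}$, so the partition function for $-\psi$ over $(n,\e)$-separated sets is bounded below by a constant depending only on $\e$. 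Taking the limsup in $\frac{1}{n}\log$ yields $P(f|_\Lambda,-\psi)\ge 0$, which together with (ii) produces the desired invariant measure satisfying Pesin's formula.

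The main obstacle is the volume-distortion step $m(B_n(x,\e))\lesssim\e^{\dim M}e^{-S_n\psi(x)}$ behind (i): in the $C^1$ class one cannot appeal to the Hölder bounds on $Df$ that underlie the classical $C^{1+\alpha}$ Pesin theory. The substitute uses the no-mixed-behavior hypothesis a second time through a Pliss-type argument on the Birkhoff sums of $\psi$: along orbits that do not expand in the $E$-direction one finds sufficiently many hyperbolic times at which the $(\dim F)$-Jacobian realises its asymptotic rate, allowing the volume estimate to hold at arbitrarily small scales using only continuity of $x\mapsto Df(x)|_{F(x)}$.
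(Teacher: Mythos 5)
Your plan is essentially the paper's proof, reorganised around the pressure formalism: the paper proves upper semicontinuity of $\mu\mapsto h_\mu(f)$ via a uniform local-entropy bound (Lemma~\ref{Lem:local}, showing in effect that $f|_\Lambda$ is asymptotically entropy expansive), and then proves exactly your lower bound step as Theorem~\ref{Thm:es}, finishing via Ruelle's inequality as you describe. Two small remarks. First, the paper's volume estimate is carried out on $\dim F$-dimensional discs tangent to the $F$-cone rather than on full Bowen balls in $M$; this avoids the question of a Fubini decomposition adapted to the (merely continuous) splitting, and in particular avoids having to control the $E$-contribution to the full Jacobian. Your full-measure estimate $m(B_n(x,\e))\lesssim\e^{d}e^{-S_n\psi(x)}$ is heuristically correct but would need such a decomposition (e.g.\ a smooth fake foliation tangent to a narrow $F$-cone) to be made precise, whereas the disc version is the natural thing to integrate against. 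Second, your closing paragraph suggests using a Pliss-time argument to control distortion in the volume estimate; this is not what is needed. In the paper the distortion is handled purely by the continuity of $x\mapsto Df(x)|_F$ and the cone contraction: the error in comparing $|\det Df^n|_{T_z f^kD}|$ with $e^{S_n\psi(x)}$ is of size $C_\kappa e^{n\kappa}C_\delta^n$ with $\kappa,\log C_\delta\to 0$ as the scale shrinks, which is enough because the slack $\varepsilon$ in the Borel--Cantelli (or, in your phrasing, in the pressure gap $P\ge -\kappa$ for all $\kappa$) absorbs it. Pliss times do appear in the paper, but in the \emph{upper semicontinuity} step (Lemma~\ref{Lem:local}), where one needs uniform hyperbolicity along a subsequence of iterates to trap infinite Bowen balls inside centre-stable plaques, not in the volume step. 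With those adjustments your plan matches the paper's route.
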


In a recent paper by Liu and Lu \cite{LiL15}, for a $C^2$ map, they got measures satisfying Pesin's entropy formula for a topological attractor which admits a partially hyperbolic splitting without mixed behavior.  Cowieson and Young proved the existence of SRB measures \cite[Corollary 1]{CoY05} if $\Lambda$ is an attractor of a $C^\infty$ diffeomorphism $f$ and $\Lambda$ admits a dominated splitting $T_\Lambda M=E\oplus F$ without mixed behavior and $\limsup_{n\to\infty}(1/n)\log\|Df^n|_{F(x)}\|>0$ for any point $x\in\Lambda$.

With some additional effort from the proof of Theorem~\ref{Thm:formula}, we can know that the topological entropy varies upper semi continuous w.r.t. the diffeomorphisms. Thus, by a usual argument we can know the entropy conjecture is also true for dominated splittings without mixed behavior.

The diffeomorphism $f$ induces naturally a map $f_{*,k}:H_k(M,R)\to H_k(M,R)$ for any $0\le k\le d$, where $H_k(M,R)$ is the $k$-th homology group of $M$. Shub conjectured in \cite{Shu74} that for every $C^1$ diffeomorphism $f$,
$$\max_{0\le i\le d}{\rm sp}(f_{*,i})\le {\rm h}_{top}(f),$$
where ${\rm sp}(A)$ is the spectral radius of a linear map $A$.
\begin{theoremalph}\label{Thm:conjecture}
For a $C^1$ diffeomorphism $f$, if $M$ admits a dominated splitting without mixed behavior, then the entropy conjecture is true, i.e.,
$$\max_{0\le i\le d}{\rm sp}(f_{*,i})\le {\rm h}_{top}(f).$$

\end{theoremalph}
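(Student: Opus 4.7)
The plan is to combine three ingredients: Yomdin's theorem (the entropy conjecture for $C^\infty$ diffeomorphisms), the topological invariance of the homological action, and an upper semi-continuity statement for the topological entropy at $f$ in the $C^1$ topology. First I would approximate $f$ by a sequence of $C^\infty$ diffeomorphisms $f_n\to f$ in the $C^1$ topology. Since the induced maps $(f_n)_{*,i}\colon H_i(M,\RR)\to H_i(M,\RR)$ depend only on the homotopy class of $f_n$, they are locally constant in the $C^0$ topology, so $(f_n)_{*,i}=f_{*,i}$ for all large $n$. Applying Yomdin's theorem to each $f_n$ yields
\[
{\rm h}_{top}(f_n)\ \ge\ \max_{0\le i\le d}\log{\rm sp}\bigl((f_n)_{*,i}\bigr)\ =\ \max_{0\le i\le d}\log{\rm sp}(f_{*,i}).
\]

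The key analytic input is then to show that ${\rm h}_{top}$ is upper semi-continuous at $f$, i.e.\ $\limsup_n {\rm h}_{top}(g_n)\le {\rm h}_{top}(f)$ for every sequence $g_n\to f$ in $C^1$. Granted this, the theorem follows at once:
\[
{\rm h}_{top}(f)\ \ge\ \limsup_{n\to\infty}{\rm h}_{top}(f_n)\ \ge\ \max_{0\le i\le d}\log{\rm sp}(f_{*,i}).
\]

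To prove the upper semi-continuity I would adapt the constructions from the proof of Theorem~\ref{Thm:formula}. Dominated splittings are $C^1$-robust, so every $g$ sufficiently $C^1$-close to $f$ admits a dominated splitting $TM=E_g\oplus F_g$ that depends continuously on $g$. By the variational principle combined with Ruelle's inequality, it suffices to bound, uniformly in $g$, the metric entropy of any $g$-invariant measure by a sum of positive Lyapunov exponents, and then to show that these positive exponents are controlled by volume-growth rates along $F_g$ that themselves converge to the corresponding quantities for $f$. The no-mixed-behavior assumption at $f$ enters through a weak-$*$ compactness argument: any accumulation measure of a sequence of $g_n$-invariant measures is $f$-invariant, hence has non-positive exponents along $E$ and non-negative exponents along $F$, so the "wrong-sign" exponents of the $\mu_{g_n}$ along the perturbed splitting must tend to zero, which annihilates the extra terms produced by Ruelle's inequality.

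The principal obstacle is precisely this upper semi-continuity step. The no-mixed-behavior condition is not $C^1$-open, so nearby diffeomorphisms may well support invariant measures with small unwanted exponents along $E_g$ or $F_g$, and one must argue that their entropic contribution vanishes in the limit. Carrying this out uniformly in $g$ requires a Pliss-type selection argument that exploits the uniform exponential domination of $F_g$ over $E_g$, coupled with the weak-$*$ compactness transfer described above; this is the "additional effort from the proof of Theorem~\ref{Thm:formula}" alluded to in the excerpt, and it is the only place where the hypothesis on $f$ is used in a nontrivial way.
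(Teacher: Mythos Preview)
Your overall architecture is exactly that of the paper: approximate $f$ by a $C^\infty$ diffeomorphism $g$, use that $g_{*,i}=f_{*,i}$ for $g$ close enough, apply Yomdin to $g$, and then pass the inequality back to $f$ via upper semi-continuity of $h_{\rm top}$ at $f$. That part is fine.

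The gap is in how you propose to obtain the upper semi-continuity. Ruelle's inequality gives $h_{\mu_n}(g_n)\le\int\sum\lambda^+\,{\rm d}\mu_n$, and even if you succeed in showing that the right-hand side converges to $\int\sum\lambda^+\,{\rm d}\mu$ for the limiting $f$-invariant measure $\mu$ (which the no-mixed-behavior hypothesis would indeed allow, since it identifies $\sum\lambda^+$ with $\log|\det Df|_F|$), this only yields
\[
\limsup_{n\to\infty} h_{\mu_n}(g_n)\ \le\ \int\sum\lambda^+\,{\rm d}\mu,
\]
and the right-hand side is in general strictly larger than $h_\mu(f)$. So the argument overshoots and does not give $\limsup_n h_{\mu_n}(g_n)\le h_\mu(f)$, which is what upper semi-continuity requires. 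Nothing in your weak-$*$ compactness transfer closes this gap, because the ``wrong-sign exponents'' are not the problem: even with perfectly behaved exponents, Ruelle's inequality is typically strict.

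What the paper actually does for the upper semi-continuity is quite different: it proves a \emph{robust} local-entropy (tail-entropy) estimate. Using the plaque family theorem, a Pliss selection, and the domination, one shows that for every $\varepsilon>0$ there is $\alpha>0$ and a $C^1$-neighborhood $\mathcal U$ of $f$ such that $h_\alpha(g)\le\varepsilon$ for all $g\in\mathcal U$ (Lemma~\ref{Lem:local}). The mechanism is that bi-infinite Bowen balls of scale $\alpha$ are trapped inside $E$- or $F$-plaques, and the no-mixed-behavior hypothesis forces the entropy of these plaques to be at most $\varepsilon$ (Lemma~\ref{Lem:entropy-plaque}). One then feeds this into Bowen's inequality $h_\nu(g)\le h_\nu(g,{\cal B})+h_\alpha(g)$ for any partition ${\cal B}$ with $\|{\cal B}\|<\alpha$, and uses the elementary continuity of the finite-stage quantity $\frac1n H_\nu(\bigvee_{i=0}^{n-1}g^{-i}{\cal B})$ in $(g,\nu)$ for a $\mu$-regular partition. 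This is where the ``additional effort'' alluded to in the introduction lives; it has nothing to do with Ruelle's inequality.
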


Shub's entropy conjecture is still open. However, there are lots of interesting results on that. We give a partial list:
\begin{itemize}

\item \cite{Yom87} proved that Shub's conjecture holds for $C^\infty$ maps.

\item \cite{RuS75,ShW75} proved the conjecture for Anosov systems and general Axiom A diffeomorphisms.

\item \cite{Man75} proved the conjecture for the three-dimensional case.

\item \cite{SaX10} proved the conjecture for partially hyperbolic systems with one-dimensional center bundle.

\item \cite{LVY13} proved the conjecture for diffeomorphisms that away from ones with a homoclinic tangency.

\item \cite{LiL15} proved the conjecture for diffeomorphisms admits a partially hyperbolic splitting without mixed behavior.

\end{itemize}

We notice that the assumption of Theorem~\ref{Thm:conjecture} is not contained in any result listed above.

\bigskip

We will also consider the properties of asymptotically entropy expansive and principal symbolic extension in Section~\ref{Sec:esti-localentropy}.

\begin{Acknowledgements}
We would like to thank our student Yuntao Zang for helping us to improve the first version a lot.

\end{Acknowledgements}

\section{Definitions and Properties of entropies}

In this section, we give the definitions and properties of metric entropy, local entropy and topological entropy.

\subsection{Metric entropies}

Let $\mu$ be a probability measure. For a finite measurable partition ${\cal B}=\{B_1,B_2,\cdots,B_k\}$, we define
$${\rm H}_\mu({\cal B})=\sum_{i=1}^k-\mu(B_i)\log\mu(B_i),$$

and

$$\bigvee_{i=0}^{n-1}f^{-i}({\cal B})=\{C:~C=\bigcap_{i=0}^{n-1}f^{-i}(B_{i_j})\}.$$

If $\mu$ is an invariant probability measure of a map $f$, the metric entropy of $\mu$ w.r.t. a partition ${\cal B}$ is
$${\rm h}_\mu(f,{\cal B})=\lim_{n\to\infty}\frac{1}{n}{\rm H}_{\mu}(\bigvee_{i=0}^{n-1}f^{-i}({\cal B})),$$
and the metric entropy of $\mu$ is
$${\rm h}_\mu(f)=\sup_{{\cal B}:~\textrm{partition}}{\rm h}_{\mu}(f,{\cal B}).$$

\begin{Definition}
Given a finite partition ${\cal B}=\{B_1,B_2,\cdots,B_n\}$, the \emph{norm} of the partition is $\max_{1\le i\le n}{\rm Diam}(B_i)$. The norm of $\cal B$ is denoted by $\|{\cal B}\|$.

Given a measure $\mu$, a partition ${\cal B}$ is called \emph{regular} if  $\mu(\partial B)=0$ for any $B\in\cal B$; it is called \emph{$\alpha$-regular} if $\|{\cal B}\|<\alpha$ and it is regular.
\end{Definition}

By the definition, we have the following lemma:

\begin{Lemma}\label{Lem:finite-approximation}
Given a regular partition ${\cal B}$ of a measure $\mu$ of a diffeomorphism $f$, and given $n\in\NN$, for any $\varepsilon>0$, there is $\delta>0$ such that for any $g$ which is $\delta$-$C^1$-close to $f$, for any invariant measure $\nu$ of $g$ which is $\delta$-close to $\mu$ in the weak-$*$ topology, then
$$|\frac 1n {\rm H}_{\mu}(\bigvee_{i=0}^{n-1}f^{-i}({\cal B})) - \frac 1n {\rm H}_{\nu}(\bigvee_{i=0}^{n-1}g^{-i}({\cal B}))|<\varepsilon.$$

\end{Lemma}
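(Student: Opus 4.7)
The plan is to pass to the symbolic description of both refined partitions and reduce, via uniform continuity of $\phi(t)=-t\log t$ on $[0,1]$, to a cell-by-cell comparison of measures. Write ${\cal B}=\{B_1,\dots,B_k\}$ and, for each word $\mathbf{j}=(j_0,\dots,j_{n-1})\in\{1,\dots,k\}^n$, set
$$C^f_\mathbf{j}=\bigcap_{i=0}^{n-1}f^{-i}(B_{j_i}),\qquad C^g_\mathbf{j}=\bigcap_{i=0}^{n-1}g^{-i}(B_{j_i}).$$
Then the quantity to be estimated equals $\tfrac{1}{n}\sum_{\mathbf{j}}\bigl(\phi(\mu(C^f_\mathbf{j}))-\phi(\nu(C^g_\mathbf{j}))\bigr)$; since there are only $k^n$ cells and $\phi$ is uniformly continuous on $[0,1]$, it is enough to produce $\delta>0$ so that $|\mu(C^f_\mathbf{j})-\nu(C^g_\mathbf{j})|<\varepsilon'$ for every $\mathbf{j}$, with $\varepsilon'=\varepsilon'(\varepsilon,k,n)$ prescribed in advance.

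I would split this cell-by-cell difference by the triangle inequality into $|\mu(C^f_\mathbf{j})-\nu(C^f_\mathbf{j})|$ and $\nu(C^f_\mathbf{j}\triangle C^g_\mathbf{j})$. For the first summand, since each $f^{-i}$ is a homeomorphism one has $\partial C^f_\mathbf{j}\subseteq\bigcup_{i=0}^{n-1}f^{-i}(\partial B_{j_i})$, and $f$-invariance of $\mu$ together with $\mu(\partial B_{j_i})=0$ forces $\mu(\partial C^f_\mathbf{j})=0$; so the Portmanteau theorem makes this summand less than $\varepsilon'/2$ as soon as $\nu$ is sufficiently weak-$*$-close to $\mu$.

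For the second summand I would fix $\eta>0$ small enough that the closed tube $K_\eta=\{y\in M:d(y,\bigcup_j\partial B_j)\le\eta\}$ has $\mu(K_\eta)<\varepsilon'/(4n)$; this is possible because $K_\eta$ decreases to $\bigcup_j\partial B_j$ as $\eta\to 0$, and that union is $\mu$-null by regularity of ${\cal B}$. If $g$ is sufficiently $C^1$-close to $f$ then $\|f^i-g^i\|_{C^0}<\eta$ for every $0\le i\le n-1$; a short intermediate-value argument along a path of length $<\eta$ joining $f^i(x)$ and $g^i(x)$ then shows that any $x\in f^{-i}(B_{j_i})\triangle g^{-i}(B_{j_i})$ satisfies $f^i(x)\in K_\eta$, whence $C^f_\mathbf{j}\triangle C^g_\mathbf{j}\subseteq\bigcup_{i=0}^{n-1}f^{-i}(K_\eta)$. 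Each $f^{-i}(K_\eta)$ is closed with $\mu$-measure $\mu(K_\eta)$ by $f$-invariance, and Portmanteau applied once more (the upper-bound form for closed sets) gives $\nu(f^{-i}(K_\eta))<\varepsilon'/(2n)$, so summing over $i$ completes the second estimate. The main subtlety lies precisely here: weak-$*$ closeness of $\nu$ to $\mu$ directly controls only $\mu$-continuity sets, whereas what must be bounded is $\nu$ of a $g$-dependent set; the fix is to trap the $g$-dependent symmetric difference inside a closed $\mu$-continuity set expressed purely through $f$ and ${\cal B}$, which makes Portmanteau applicable.
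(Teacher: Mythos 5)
Your argument is correct and complete. The paper offers no proof of this lemma --- it is presented as holding ``by the definition'' --- and your plan (cell-by-cell reduction via uniform continuity of $\phi(t)=-t\log t$, Portmanteau applied to the $\mu$-continuity sets $C^f_{\mathbf j}$ obtained from regularity of ${\cal B}$ and $f$-invariance, and trapping the $g$-dependent symmetric difference $C^f_{\mathbf j}\triangle C^g_{\mathbf j}$ inside the $f$-defined closed tubes $\bigcup_{i}f^{-i}(K_\eta)$ so that Portmanteau remains applicable) is precisely the natural way to substantiate the assertion, with all steps checking out.
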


The following fundamental results are from \cite[Section 8.2]{Wal82}:

\begin{Lemma}\label{Lem:H}

Let $\mu_1,\mu_2,\cdots,\mu_n$ be probability measures and $s_1,s_2,\cdots,s_n$ be non-negative numbers such that $\sum s_i=1$. For any partition ${\cal B}$, we have
$$\sum_{i=1}^n s_i{\rm H}_{\mu_i}({\cal B})\le {\rm H}_{\sum_{i=1}^n s_i\mu_i}({\cal B}).$$

\end{Lemma}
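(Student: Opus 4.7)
The plan is to reduce the claimed inequality to the scalar concavity of the function $\phi(x)=-x\log x$ and apply it atom-by-atom over the partition. Since $\phi''(x)=-1/x<0$, this $\phi$ is concave on $[0,1]$ (with the convention $\phi(0)=0$), so Jensen's inequality gives
$$\phi\Big(\sum_{i=1}^n s_i t_i\Big)\ge \sum_{i=1}^n s_i\phi(t_i)$$
for every choice of $t_1,\dots,t_n\in[0,1]$ and non-negative weights $s_i$ summing to $1$.

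Next, I would expand both sides of the lemma using the definition ${\rm H}_\mu({\cal B})=\sum_{B\in{\cal B}}\phi(\mu(B))$. The left-hand side becomes $\sum_{B\in{\cal B}}\sum_{i=1}^n s_i\phi(\mu_i(B))$, and using the fact that $(\sum_i s_i\mu_i)(B)=\sum_i s_i\mu_i(B)$, the right-hand side becomes $\sum_{B\in{\cal B}}\phi(\sum_{i=1}^n s_i\mu_i(B))$. Applying the scalar concavity inequality with $t_i=\mu_i(B)$ for each fixed atom $B$ and then summing over the finitely many $B\in{\cal B}$ produces exactly the desired bound.

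There is no substantive obstacle: the whole content of the lemma is the pointwise concavity of $-x\log x$, and passing from a single atom to the whole partition is only a finite sum interchange. The only point to keep track of is the convention $\phi(0)=0$ so that atoms of measure zero contribute nothing on either side. This is why the statement is used as a standard building block in Walters' book rather than being singled out as a theorem requiring its own argument.
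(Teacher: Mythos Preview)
Your argument is correct: the inequality is exactly the atom-by-atom concavity of $\phi(x)=-x\log x$, and summing over the finitely many $B\in\mathcal{B}$ gives the result. The paper does not supply its own proof but simply cites \cite[Section~8.2]{Wal82}, where precisely this standard concavity argument is given, so your approach coincides with the intended one.
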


\subsection{Local entropy}

We need to define the Bowen balls or dynamical balls in the entropy theory. Given a point $x$ and $\alpha>0$,
\begin{itemize}

\item the closed ball of radius $\alpha$ at $x$: $B(x,\alpha)=\{y\in M:~d(x,y)\le\alpha\}$;

\item $n$-th Bowen ball for $f$: $B_n(x,\alpha,f)=\bigcap_{0\le i\le n-1}f^{-i}(B(f^i(x),\alpha))$; for simplicity, we denote $B_n(x,\alpha)=B_n(x,\alpha,f)$ if there is no confusion;

\item bi-$n$-th Bowen ball: $B_{\pm n}(x,\alpha)=\bigcap_{-n+1\le i\le n-1}f^{-i}(B(f^i(x),\alpha))$;

\item infinite Bowen ball for $f$: $B_\infty(x,\alpha)=B_{+\infty}(x,\alpha)=\bigcap_{n\in\NN}f^{-n}(B(f^n(x),\alpha))$;  for simplicity, we denote $B_\infty(x,\alpha)=B_\infty(x,\alpha,f)$ if there is no confusion;

\item bi-infinite Bowen ball: $B_{\pm \infty}(x,\alpha)=\bigcap_{n\in\ZZ}f^{-n}(B(f^n(x),\alpha))$

\end{itemize}

\begin{Definition}[Local entropy]
For a compact set $\Gamma$ (not necessarily invariant), for $n\in\NN$ and $\delta$, a finite set $P\subset \Gamma$ is called an $(n,\delta)$-spanning set for $f$ (or $(n,\delta,f)$-spanning set) if $P\cap B_n(x,\delta)$ is not empty for any $x\in\Gamma$. The minimal cardinality of all $(n,\delta)$-spanning set is denoted by $r_n(\Gamma,\delta)$.

Then one can define the entropy of $\Gamma$ by
$${\rm h}(f,\Gamma)={\rm h}(f|_\Gamma)=\lim_{\delta\to 0}\limsup_{n\to\infty} \frac{1}{n} \log r_n(\Gamma,\delta).$$

When $\Gamma$ is a compact invariant set, we also call ${\rm h}(f|_\Gamma)$ the topological entropy of $f$ on $\Gamma$.  Sometimes, one denotes it by ${\rm h}_{top}(f|_\Gamma)$.

We then define the local entropy of the scale $\alpha$ for a compact set $\Gamma$ by
$${\rm h}_{\alpha}(f|_\Gamma)=\sup_{x\in\Gamma}{\rm h}(f,B_{\infty}(x,\alpha)).$$

\end{Definition}

One has the following lemma for spanning sets from Bowen \cite[Lemma 2.1]{Bow72}.
\begin{Lemma}\label{Lem:product-spanning}
Assume that $\Gamma$ is a compact set and $\varepsilon>0$. Let $0=t_0<t_1<t_2<\cdots<t_r=n$ be integers. If $P_i$ is a $(t_{i+1}-t_i,\varepsilon)$-spanning set of $f^{t_i}(\Gamma)$ for any $0\le i\le r-1$, then
$$r_n(\Gamma,2\varepsilon)\le\prod_{i=0}^{r-1}\# P_i.$$

\end{Lemma}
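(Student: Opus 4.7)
The plan is to mimic Bowen's original construction: build a candidate $(n,2\varepsilon)$-spanning set of $\Gamma$ indexed by the Cartesian product $P_0\times P_1\times\cdots\times P_{r-1}$, and then verify the spanning property through a triangle inequality applied segment by segment.

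More precisely, I would proceed as follows. For each tuple $\mathbf{y}=(y_0,y_1,\dots,y_{r-1})\in P_0\times\cdots\times P_{r-1}$, let
\[
\Gamma(\mathbf{y})=\bigl\{x\in\Gamma :\ f^{t_i}(x)\in B_{t_{i+1}-t_i}(y_i,\varepsilon)\ \text{for all}\ 0\le i\le r-1\bigr\}.
\]
Whenever $\Gamma(\mathbf{y})\neq\emptyset$, choose one point $z(\mathbf{y})\in\Gamma(\mathbf{y})$; let $P$ be the collection of all such chosen points. Then clearly $\#P\le\prod_{i=0}^{r-1}\#P_i$, giving the claimed cardinality bound.

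To check that $P$ is $(n,2\varepsilon,f)$-spanning, I would take an arbitrary $x\in\Gamma$. For each $i$, the point $f^{t_i}(x)$ lies in $f^{t_i}(\Gamma)$, so the spanning property of $P_i$ furnishes some $y_i\in P_i$ with $f^{t_i}(x)\in B_{t_{i+1}-t_i}(y_i,\varepsilon)$, that is,
\[
d\bigl(f^{k-t_i}(y_i),f^k(x)\bigr)\le\varepsilon\qquad\text{for all } t_i\le k\le t_{i+1}-1.
\]
The tuple $\mathbf{y}=(y_0,\dots,y_{r-1})$ thus produced satisfies $x\in\Gamma(\mathbf{y})$, so $\Gamma(\mathbf{y})$ is nonempty and the associated point $z=z(\mathbf{y})\in P$ exists and also satisfies $d(f^{k-t_i}(y_i),f^k(z))\le\varepsilon$ on the same range. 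The triangle inequality then gives $d(f^k(x),f^k(z))\le 2\varepsilon$ for every $k\in[t_i,t_{i+1}-1]$, and since the intervals $[t_i,t_{i+1}-1]$ partition $\{0,1,\dots,n-1\}$, we obtain $z\in B_n(x,2\varepsilon,f)\cap P$.

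There is no serious obstacle here; the argument is essentially combinatorial. The only point that requires minor care is ensuring that the chain $0=t_0<\cdots<t_r=n$ covers the full orbit segment $\{0,\dots,n-1\}$ without gaps or overlaps, which is automatic from the choice of the $t_i$. Everything else is a bookkeeping exercise on the triangle inequality along each sub-segment.
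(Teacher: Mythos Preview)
Your argument is correct and is precisely Bowen's original proof; the paper does not give its own proof but simply cites \cite[Lemma~2.1]{Bow72}, so there is nothing to compare against beyond noting that you have reproduced that argument faithfully.
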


By using the definition, we have

\begin{Lemma}\label{Lem:entropy-nStep}
Given any $\alpha>0$, for any $x\in M$ and any $m\in\NN$, we have
$${\rm h}(f,B_{\pm\infty}(x,\alpha))={\rm h}(f,B_{\pm\infty}(f^m(x),\alpha)).$$
\end{Lemma}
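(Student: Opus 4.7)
The plan rests on two observations. First, re-indexing the intersection in the definition of $B_{\pm\infty}$ gives immediately
\[
B_{\pm\infty}(f^m(x),\alpha)=f^m\bigl(B_{\pm\infty}(x,\alpha)\bigr),
\]
so $\Gamma:=B_{\pm\infty}(x,\alpha)$ and $\Gamma':=B_{\pm\infty}(f^m(x),\alpha)$ are related by the homeomorphism $f^m$. Second, both $\Gamma$ and $\Gamma'$ lie in a fixed compact region (the $\alpha$-neighborhood of the full orbit of $x$ inside the compact manifold $M$), so $f^m$ and $f^{-m}$ are uniformly continuous there.

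For the inequality ${\rm h}(f,\Gamma')\le {\rm h}(f,\Gamma)$, I would start from a minimal $(n,\delta,f)$-spanning set $P$ of $\Gamma$. For each $y=f^m(z)\in\Gamma'$ pick $p\in P$ with $d(f^i(z),f^i(p))\le\delta$ for $0\le i\le n-1$; then the identity $d(f^j(f^m(z)),f^j(f^m(p)))=d(f^{j+m}(z),f^{j+m}(p))$ shows that $f^m(P)$ is an $(n-m,\delta,f)$-spanning set of $\Gamma'$. Hence $r_{n-m}(\Gamma',\delta)\le r_n(\Gamma,\delta)$, and dividing by $n$, taking $\limsup_{n\to\infty}$, and then $\delta\to 0$ gives the desired bound.

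The reverse inequality is the main step. Given a minimal $(n,\delta,f)$-spanning set $P'$ of $\Gamma'$, the image $f^{-m}(P')$ automatically controls each orbit $z\in\Gamma$ at scale $\delta$ for times $m\le j\le n+m-1$. To cover the missing initial times $0\le j\le m-1$, I would iterate the uniform continuity of $f^{-1}$ on the fixed compact region $m$ times starting from $d(f^m(z),f^m(q))\le\delta$, obtaining $d(f^j(z),f^j(q))\le\delta^*(\delta,m)$ on $0\le j\le m-1$ with $\delta^*\to 0$ as $\delta\to 0$. Thus $f^{-m}(P')$ is an $(n+m,\delta^*,f)$-spanning set of $\Gamma$, so $r_{n+m}(\Gamma,\delta^*)\le r_n(\Gamma',\delta)$; the shift of $m$ in the time window and the replacement of $\delta$ by $\delta^*$ are absorbed by $(1/n)\log(\cdot)$ as $n\to\infty$ and then $\delta\to 0$. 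No genuine obstacle arises; the only bookkeeping is keeping the time indices straight, and the lemma reflects the general principle that the $f$-entropy of a compact subset of $M$ is unchanged by translating it along a power of $f$.
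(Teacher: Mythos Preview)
Your proof is correct. The forward inequality ${\rm h}(f,\Gamma')\le {\rm h}(f,\Gamma)$ is handled exactly as in the paper: push a spanning set of $\Gamma$ forward by $f^m$ and discard the first $m$ time-steps. For the reverse inequality the paper takes a different route: rather than appeal to the uniform continuity of $f^{-m}$ to enlarge the scale from $\delta$ to some $\delta^*(\delta,m)$, it invokes Bowen's product-spanning lemma (Lemma~\ref{Lem:product-spanning}). Fixing an $\varepsilon/4$-dense subset of $M$ of cardinality $N_\varepsilon$, each of the first $m$ time-steps can be $(1,\varepsilon)$-spanned by at most $N_\varepsilon$ points, and concatenating these with an $(n,\varepsilon)$-spanning set of $\Gamma'$ yields $r_{m+n}(\Gamma,2\varepsilon)\le N_\varepsilon^{\,m}\, r_n(\Gamma',\varepsilon)$; the factor $N_\varepsilon^{\,m}$ vanishes after dividing by $n$ and letting $n\to\infty$. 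Your argument avoids the product lemma at the price of tracking a modulus of continuity, while the paper's keeps the scale essentially fixed at the price of a harmless multiplicative constant. Both are short and equally valid.
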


\begin{proof}

For any $\varepsilon>0$, let us fix an $\varepsilon/4$-dense set in $M$ whose cardinality is $N_\varepsilon$. Thus for any compact set $\Gamma$, there is a $(1,\varepsilon)$-spanning set whose cardinality is at most $N_\varepsilon$.  For any $n\in\NN$, by Lemma~\ref{Lem:product-spanning}, we have
$$r_{m+n}(B_{\pm\infty}(x,\alpha),2\varepsilon)\le N_\varepsilon^m r_n(B_{\pm\infty}(f^m(x),\alpha),\varepsilon).$$
On the other hand, for any $\varepsilon>0$ and any $n\in\NN$, if $P_{m+n}$ is an $(n+m,\varepsilon)$-spanning set for $f$ of $B_{\pm\infty}(x,\alpha)$ satisfying $\# P_{m+n}=r_{m+n}(B_{\pm\infty}(x,\alpha),\varepsilon)$, then $f^m(P_{m+n})$ is an $(n,\varepsilon)$-spanning set for $f$ of $B_\infty(f^m(x),\alpha)$. Hence, we have
$$r_{m+n}(B_{\pm\infty}(x,\alpha),\varepsilon)=\# f^m(P_{m+n})\ge r_n(B_{\pm\infty}(f^m(x),\alpha),\varepsilon).$$ 
By taking the limits, one can get the conclusion. 
\end{proof}

\subsection{Local entropies for $f$ and $f^{-1}$}

In this subsection, we need to prove the following proposition. We borrow some ideas from \cite[Proposition 2.5]{LVY13}.

\begin{Proposition}\label{Pro:inverse-entropy}
For any ergodic measure $\mu$, there is a full $\mu$-measure set $R$ such that
$$\sup_{x\in R}{\rm h}(f,B_{\pm\infty}(x,\alpha))=\sup_{x\in R}{\rm h}(f^{-1},B_{\pm\infty}(x,\alpha)).$$

\end{Proposition}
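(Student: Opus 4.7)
The plan is to leverage the $f$-invariance of the local-entropy functions (via Lemma~\ref{Lem:entropy-nStep}) together with ergodicity of $\mu$ to reduce the identity to comparing two essential constants, and then to relate those constants through a dualization between $f$- and $f^{-1}$-spanning sets.

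First I would set $\Phi(x) = {\rm h}(f, B_{\pm\infty}(x,\alpha))$ and $\Psi(x) = {\rm h}(f^{-1}, B_{\pm\infty}(x,\alpha))$. Lemma~\ref{Lem:entropy-nStep} asserts $\Phi(x) = \Phi(f^m(x))$ for every $m \in \NN$, so $\Phi$ is $f$-invariant (invariance under $f^{-1}$ follows by evaluating the identity at $f^{-1}(x)$). The very same proof, run with $f$ replaced by $f^{-1}$ and using that the bi-infinite Bowen ball $B_{\pm\infty}(x,\alpha)$ is defined symmetrically in $f$ and $f^{-1}$, shows $\Psi$ is $f^{-1}$-invariant, hence $f$-invariant. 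Ergodicity of $\mu$ then forces both $\Phi$ and $\Psi$ to be $\mu$-a.e.\ constant, say with essential values $H_+$ and $H_-$. Taking $R$ to be the full-measure set on which $\Phi\equiv H_+$ and $\Psi\equiv H_-$, the two sups in the statement equal $H_+$ and $H_-$ respectively, and the proposition reduces to $H_+ = H_-$.

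For this equality I would use the elementary identity
\[
r_n(\Gamma,\delta,f^{-1}) \;=\; r_n(f^{-(n-1)}(\Gamma),\delta,f),
\]
valid for any compact $\Gamma$: the map $y \mapsto f^{-(n-1)}(y)$ takes the $(n,\delta,f^{-1})$-closeness relation on $\Gamma$ bijectively to the $(n,\delta,f)$-closeness relation on $f^{-(n-1)}(\Gamma)$. Specializing $\Gamma = B_{\pm\infty}(x,\alpha)$ and using equivariance $f^{-(n-1)}(B_{\pm\infty}(x,\alpha)) = B_{\pm\infty}(f^{-(n-1)}(x),\alpha)$ of the bi-infinite Bowen ball gives
\[
r_n(B_{\pm\infty}(x,\alpha),\delta,f^{-1}) \;=\; r_n(B_{\pm\infty}(f^{-(n-1)}(x),\alpha),\delta,f).
\]
Combining this with Lemma~\ref{Lem:product-spanning} applied to split a $(2n-1,\delta,f)$-spanning set of $B_{\pm\infty}(f^{-(n-1)}(x),\alpha)$ at the midpoint $t_1 = n-1$, and then using the approximate invariance built into the proof of Lemma~\ref{Lem:entropy-nStep} to trade the varying base point $f^{-(n-1)}(x)$ for the fixed base point $x$ at the cost of a prefactor $N_\delta^{o(n)}$, should yield $H_- \le H_+$. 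Swapping the roles of $f$ and $f^{-1}$ gives the converse inequality.

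The main obstacle is that the ergodic constancy of $\Phi$ only controls a $\limsup$ at each fixed base point, while the identity above requires us to evaluate $r_n$ at the moving base point $f^{-(n-1)}(x)$. The limsup defining $\Psi$ could \emph{a priori} be attained along a sparse sequence of indices $n_k$ for which $f^{-(n_k-1)}(x)$ lands in a small-measure region where the $f$-spanning count happens to be much larger than $e^{nH_+}$. Birkhoff's theorem controls the density of such bad indices but does not eliminate them. Following the strategy of \cite[Proposition 2.5]{LVY13}, this is circumvented by combining the product-spanning bound of Lemma~\ref{Lem:product-spanning} (which absorbs bad epochs into factors growing at most like $N_\delta^{cn}$ with $c$ tending to $0$ as the measure of the bad set shrinks) with an Egorov-type uniformization of $\phi_\delta$ on a large-measure set, so that the bad contribution vanishes after dividing by $n$ and letting $\delta \to 0$.
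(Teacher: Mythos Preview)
Your overall strategy converges to the paper's: the substantive work you outline in your final paragraph---Egorov-type uniformization on a large-measure set combined with the product-spanning Lemma~\ref{Lem:product-spanning} to absorb the rare bad times---is exactly the mechanism the paper uses (adapted from \cite[Proposition~2.5]{LVY13}). The paper carries this out directly, arguing by contradiction from an assumed gap $a_1>a_2$ between the two suprema, with $R$ taken to be the set of bi-generic points of $\mu$.

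Where your route differs is the opening reduction: you want to invoke ergodicity to replace each sup by an essential constant $H_\pm$, and then compare the constants. This requires that $\Phi$ and $\Psi$ be \emph{measurable}, which you do not address. Measurability of $x\mapsto {\rm h}(f,B_{\pm\infty}(x,\alpha))$ is not automatic---the definition involves a $\lim_{\delta\to0}\limsup_n$ of spanning numbers of sets that themselves vary with $x$---and the paper devotes a separate argument to it in Proposition~\ref{Pro:almostevery} (via closed separated sets and upper semicontinuity of $x\mapsto B_{\pm\infty}(x,\alpha)$). The paper's ordering is deliberate: Proposition~\ref{Pro:inverse-entropy} is proved first, without measurability, by working with the sup over generic points; measurability and a.e.\ constancy come afterward in Proposition~\ref{Pro:almostevery}, which then \emph{cites} Proposition~\ref{Pro:inverse-entropy} to equate the two constants. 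Your reorganization is not circular---the measurability argument in Proposition~\ref{Pro:almostevery} is self-contained---but you must import it before your reduction is legitimate.

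A smaller point: the direct attempt in your middle paragraph (split a $(2n-1)$-spanning problem at its midpoint and pay only $N_\delta^{o(n)}$ to recenter from $f^{-(n-1)}(x)$ back to $x$) does not work as written. The recentering cost from the proof of Lemma~\ref{Lem:entropy-nStep} is $N_\delta^{m}$ with $m$ the shift, and here $m=n-1$ is of order $n$, not $o(n)$; you correctly diagnose this as the ``main obstacle'' and fall back on the Egorov/product scheme. Once you do that, you are essentially running the paper's proof, just with the roles of $f$ and $f^{-1}$ swapped relative to its presentation.
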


\begin{proof}
In fact, since $\mu$ is ergodic, one can take
$$R=\{x:~\frac{1}{n}\sum_{i=0}^{n-1}\delta_{f^i(x)}\to\mu,~\frac{1}{n}\sum_{i=0}^{n-1}\delta_{f^{-i}(x)}\to\mu\}.$$

In this proposition, the situation for $f$ and $f^{-1}$ is symmetric. Without loss of generality, one can assume that there is a point $x_0\in R$ such that
$${\rm h}(f,B_{\pm\infty}(x_0,\alpha))>\sup_{x\in R}{\rm h}(f^{-1},B_{\pm\infty}(x,\alpha)).$$
Thus, one can find two numbers $a_1>a_2$ such that
$${\rm h}(f,B_{\pm\infty}(x_0,\alpha))>a_1>a_2>\sup_{x\in R}{\rm h}(f^{-1},B_{\pm\infty}(x,\alpha)).$$
Recall the definition of the local entropy, there is $\varepsilon_0>0$ small such that for any $\varepsilon\in(0,\varepsilon_0)$, we have
$$\limsup_{n\to\infty}\frac{1}{n}\log r_n(B_{\pm\infty}(x_0,\alpha),\varepsilon)>a_1.$$
In other words, there is a sequence of integers $\{n_i\}$ such that
$$r_{n_i}(B_{\pm\infty}(x_0,\alpha),\varepsilon)>{\rm e}^{a_1 n_i}.$$
\bigskip

For this $\varepsilon_0>0$, we choose a finite set $P_0$ that $\varepsilon_0/8$-dense in $M$. Thus, for any compact subset $\Gamma$ of $M$, there is a $\varepsilon_0/2$-dense set in $\Gamma$, whose cardinality is at most $\# P_0$.

\bigskip

Take 
$$\mu_{n_i}=\frac{1}{n}\sum_{j=0}^{n-1}\delta_{f^j(x_0)}.$$
Since $\mu$ is ergodic, we have that $\mu_{n_i}\to\mu$ as $i\to\infty$.

For $\mu$, for each $n\in\NN$, one can find $\varepsilon_n\ll\varepsilon_0$ such that if we define the set $R_n$ as
$$R_n=\{x\in R:~r_m(B_{\pm\infty}(x,\alpha),\varepsilon_n/4,f^{-1})<{\rm e}^{a_2 m},~~~\forall m\ge n\},$$
then we have
\begin{itemize}
\item$\mu(\cup_{n\in\NN}R_n)=1$. 
\item $\{R_n\}$ is an increasing sequence of measurable sets.
\end{itemize}

Then one can choose an increasing sequence of \emph{compact} sets $\{\Lambda_n\}$ such that
\begin{itemize}

\item $\Lambda_n\subset R_n$ for each $n\in\NN$.

\item $\mu(\cup_{n\in\NN}\Lambda_n)=1$.

\end{itemize}

Now we fix some $n$ that is probably large enough. For any $x\in\Lambda_n$, let $P_n(x)$ be an $(n,\varepsilon_n/4,f^{-1})$-spanning set of $B_{\pm\infty}(x,\alpha)$ such that $\# P_n(x)<{\rm e}^{a_2 n}$. Then
$$U_n(x)=\bigcup_{z\in P_n(x)}B_n(z,\varepsilon_n/2,f^{-1})$$
is a neighborhood of $B_{\pm\infty}(x,\alpha)$.

We have the following observations.

\begin{itemize}

\item $f^{-n}P_n(x)$ is a $(n,\varepsilon_n/4)$-spanning set of $B_{\pm\infty}(f^{-n}(x),\alpha)$ for $f$ for any $x\in R_n$. It is clear that $\# f^{-n}P_n(x)<{\rm e}^{a_2 n}$.

\item $\mu(f^{-n}(\Lambda_n))=\mu(\Lambda_n)$

\end{itemize}

Now we choose a smaller neighborhood $V_n(x)\subset U_n(x)$ of $x$ and an integer $N_n(x)$ such that for any $y\in V_n(x)$, we have
$$B_{\pm N_n(x)}(y,\alpha)\subset U_n(x).$$
By the definition of $U_n(x)$, we have that for any $y\in V_n(x)$, $B_{\pm N_n(x)}(y,\alpha)$ is $(n,\varepsilon_n/2,f^{-1})$-spanned by $P_n(x)$.

\bigskip

As a corollary, we have that
$$\{V_n(x)\}_{x\in\Lambda_n}$$
is an open covering of $\Lambda_n$. Thus, there are finitely points $\{x_1,x_2,\cdots,x_k\}\subset\Lambda_n$ such that
$$W_n=\bigcup_{1\le j\le k}V_n(x_j)\supset\Lambda_n.$$
Consequently,
$$\lim_{n\to\infty}\mu(W_n)=1.$$
Take 
$$H(n)=\max\{n,N_n(x_1),\cdots,N_n(x_k)\}.$$

We have that 
$$\lim_{i\to\infty}\mu_{n_i}(f^{-n}(W_n))=\lim_{i\to\infty}\mu_{n_i}(f^{-n}(W_n))\ge\mu(f^{-n}(W_n))=\mu(W_n)\ge\mu(\Lambda_n).$$

Now we consider the positive iteration of $x_0$. For $n_i$ large, we find a sequence of times $0=\iota_0<\iota_1<\cdots<\iota_L=n_i$ by the following way inductively:
\begin{itemize}

\item if $f^{\iota_j}\in f^{-n}(W_n)$ and $\iota_j\in[H(n),n_i-H(n)]$, then $\iota_{j+1}=\iota_j+n$;

\item otherwise, one takes $\iota_{j+1}=\iota_j+1$.

\end{itemize}

Let 
$$A_n=\{\iota_j:~f^{\iota_j}(x_0)\in f^{-n}(W_n),~\iota_j\in[H(n),n_i-H(n)]\},~~~B_n=\{\iota_i\}_{0\le j\le L}\setminus A_n.$$
We have the following properties:
\begin{itemize}

\item if $\iota_j\in A_n$, then there is some $k_j\in\{1,2,\cdots,k\}$ such that $f^{\iota_j+n}(x_0)\in V(x_{k_j})$, and
$$f^{\iota_j+n}(B_{\pm n_i}(x_0,\alpha))\subset B_{\pm H(n)}(f^{\iota_j+n}(x_0),\alpha)\subset U_n(x_{k_j}).$$

\end{itemize}

This implies that $f^{\iota_j}(B_{\pm n_i}(x_0,\alpha))$ is $(n,\varepsilon_n/4,f)$-spanned by $f^{-n}P_n(x_{k_j})$

By Lemma~\ref{Lem:product-spanning}, we have

$$r_{n_i}(B_{\pm n_i}(x_0,\alpha),\varepsilon_0/2)\le\left(\prod_{\iota_j\in A_n}\# f^{-n}P_n(x_{k_j})\right)\cdot (\# P_0)^{\# B_n}\le {\rm e}^{a_2n\#A_n}\cdot (\# P_0)^{\#B_n},$$

By definitions, we have
\begin{itemize}

\item $n\#A_n\le n_i$,

\item $\#B_n\le \#\{0\le j<n_i:~f^j(x_0)\notin f^{-n}(W_n)\}+2H(n)\le (1-\mu_{n_i}(W_n))n_i+2H(n)$.

\end{itemize}

This implies that

\begin{eqnarray*}
r_{n_i}(B_{\pm n_i}(x_0,\alpha),\varepsilon_0/2)&\le&{\rm e}^{a_2 n_i}\cdot (\# P_0)^{(1-\mu_{n_i}(W_n))n_i+2H(n)}.
\end{eqnarray*}
Thus,
$$\frac{1}{n_i}\log r_{n_i}(B_{\pm n_i}(x_0,\alpha),\varepsilon_0/2)\le a_2 +[(1-\mu_{n_i}(W_n))+2\frac{H(n)}{n_i}]\log\# P_0.$$
For fixed $n$, we have that $H(n)$ is much smaller than $n_i$. By taking $n_i\to\infty$, we have
$$\limsup_{n_i\to\infty}\frac{1}{n_i}r_{n_i}(B_{\pm \infty}(x_0,\alpha),\varepsilon_0/2)\le\limsup_{n_i\to\infty}\frac{1}{n_i}r_{n_i}(B_{\pm n_i}(x_0,\alpha),\varepsilon_0/2)\le a_2+(1-\mu(W_n))\log\# P_0.$$

Then by asking $n\to\infty$,
$$\limsup_{n_i\to\infty}\frac{1}{n_i}r_{n_i}(B_{\pm \infty}(x_0,\alpha),\varepsilon_0/2)\le a_2+\lim_{n\to\infty}(1-\mu(W_n))\log\# P_0=a_2<a_1.$$
We get a contradiction. The proof is complete.

\end{proof}

In fact, we have the following more accurate characterization.

\begin{Proposition}\label{Pro:almostevery}
For any ergodic measure $\mu$, there is a constant $H$ such that for $\mu$-a.e. $x$, we have
$$H={\rm h}(f,B_{\pm\infty}(x,\alpha))={\rm h}(f^{-1},B_{\pm\infty}(x,\alpha)).$$

\end{Proposition}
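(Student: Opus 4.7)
Set $\varphi(x)={\rm h}(f,B_{\pm\infty}(x,\alpha))$ and $\psi(x)={\rm h}(f^{-1},B_{\pm\infty}(x,\alpha))$, and let $H=\sup_{x\in R}\varphi(x)=\sup_{x\in R}\psi(x)$, the equality being Proposition~\ref{Pro:inverse-entropy}. Both $\varphi$ and $\psi$ are pointwise $f$-invariant, by Lemma~\ref{Lem:entropy-nStep} (or directly, because $f(B_{\pm\infty}(x,\alpha))=B_{\pm\infty}(f(x),\alpha)$ and topological entropy is a conjugacy invariant), and Borel measurable, since $x\mapsto B_{\pm\infty}(x,\alpha)$ is upper semi-continuous in the Hausdorff topology and this passes through to the spanning counts. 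Consequently, for each $\varepsilon>0$ the sets $\{\varphi\le H-\varepsilon\}$ and $\{\psi\le H-\varepsilon\}$ are Borel and $f$-invariant, and by ergodicity of $\mu$ each has measure $0$ or $1$.

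The plan is to show $\mu\{\varphi\le H-\varepsilon\}=0$ for every $\varepsilon>0$; by the $f\leftrightarrow f^{-1}$ symmetric argument the same will hold for $\psi$, and together they force $\varphi=\psi=H$ on a $\mu$-full set, which is the desired conclusion.

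Suppose for contradiction that $\mu\{\varphi\le H-\varepsilon\}=1$ for some $\varepsilon>0$. Choose $a_1>a_2$ with $H>a_1>a_2>H-\varepsilon$ and pick $y_0\in R$ with $\psi(y_0)>a_1$, which exists because $\sup_R\psi=H$. I now replay the proof of Proposition~\ref{Pro:inverse-entropy} with the roles of $f$ and $f^{-1}$ (equivalently $\varphi$ and $\psi$) interchanged. In that proof the hypothesis $\sup_R\psi<a_2$ (here $\sup_R\varphi<a_2$) is used in exactly one place: to ensure that the sets
$$R_n=\{x\in R:r_m(B_{\pm\infty}(x,\alpha),\varepsilon_n/4,f)<{\rm e}^{a_2 m}\ \text{for all}\ m\ge n\}$$
satisfy $\mu(\bigcup_n R_n)=1$. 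In our setting we have only the weaker fact $\varphi\le H-\varepsilon<a_2$ on a $\mu$-full set, but this still suffices: for any such $x$, $\limsup_{m\to\infty}\frac{1}{m}\log r_m(B_{\pm\infty}(x,\alpha),\delta,f)<a_2$ for all sufficiently small $\delta$, so $x\in R_n$ once $n$ is large enough. The remainder of the argument---selection of compact $\Lambda_n\subset R_n$ with $\mu(\bigcup_n\Lambda_n)=1$, the neighborhoods $U_n(x)$ and $V_n(x)$, the finite open cover $W_n$ of $\Lambda_n$, the equidistribution $\mu_{n_i}(f^{-n}(W_n))\to\mu(W_n)$, and the telescoping spanning estimate via Lemma~\ref{Lem:product-spanning}---transfers verbatim with $y_0$ playing the role of $x_0$, and yields $\psi(y_0)\le a_2<a_1$, the desired contradiction.

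The essential (and only) subtlety, which is the main obstacle, is verifying that the uniform hypothesis $\sup_R\psi<a_2$ in Proposition~\ref{Pro:inverse-entropy} enters exclusively through the full-measure statement $\mu(\bigcup_n R_n)=1$; once this is confirmed, a $\mu$-almost everywhere bound is equally good, and the rest of the argument transfers without change. Measurability of $\varphi$ and $\psi$, used to invoke ergodicity, is a secondary technicality handled by the upper semi-continuity of bi-infinite Bowen balls noted above.
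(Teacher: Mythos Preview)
Your proof is correct and uses the same ingredients as the paper---measurability, invariance (Lemma~\ref{Lem:entropy-nStep}), ergodicity, and Proposition~\ref{Pro:inverse-entropy}---but with the emphasis inverted. The paper's proof devotes nearly all its effort to measurability: it introduces a modified count $\bar s_n$ (maximal cardinality of a \emph{closed} $(n,\varepsilon)$-separated set) and shows that each level set $L_{a,m}=\{x:\bar s_m(B_{\pm\infty}(x,\alpha),\varepsilon)\ge e^{am}\}$ is closed via the upper semi-continuity of $x\mapsto B_{\pm\infty}(x,\alpha)$; once that is done, ergodicity makes $\varphi$ and $\psi$ constant a.e.\ in one line, and Proposition~\ref{Pro:inverse-entropy} is invoked tersely (implicitly on the full-measure subset of $R$ where both functions take their a.e.\ values) to equate the two constants. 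You instead treat measurability as a brief technicality and spend your effort making explicit precisely what the paper elides in that last invocation: that in the proof of Proposition~\ref{Pro:inverse-entropy} the uniform bound $\sup_R\psi<a_2$ is used only through $\mu(\bigcup_n R_n)=1$, so an a.e.\ bound serves equally well. That observation is correct, and your route has the minor bonus of identifying the common constant as $\sup_R\varphi=\sup_R\psi$. The shorter alternative, once a.e.\ constancy is known, is simply to apply (the proof of) Proposition~\ref{Pro:inverse-entropy} to $R\cap\{\varphi=c_1\}\cap\{\psi=c_2\}$ and read off $c_1=c_2$ directly.
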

\bigskip

For proving Proposition~\ref{Pro:almostevery}, we need to adapt the definition of the entropy. For a compact invariant set $\Gamma$, given $n\in\NN$ and $\varepsilon>0$, a subset $P$ of $\Gamma$ is called an $(n,\varepsilon)$-separated set of $\Gamma$ if for any $x,y\in P$, $d_n(x,y)>\varepsilon$. Denote by $s_n(\Gamma,\varepsilon)$ the largest cardinality for any $(n,\varepsilon)$-subset of  $\Gamma$.

By summarizing \cite[Chapter 7.2]{Wal82}, we have
\begin{itemize}
\item $r_n(\Gamma,\varepsilon)\le s_n(\Gamma,\varepsilon)\le r_n(\Gamma,\varepsilon_2)$ for any $n$ and any $\varepsilon$.

\item ${\rm h}(f,\Gamma)=\lim_{\varepsilon\to0}\limsup_{n\to\infty}(1/n)\log s_n(\Gamma,\varepsilon)=\lim_{\varepsilon\to0}\limsup_{n\to\infty}(1/n)\log r_n(\Gamma,\varepsilon)$.

\end{itemize}

We need to modify the definition of $s_n$ to $\bar s_n$ by the following way: for a compact invariant set $\Gamma$, given $n\in\NN$ and $\varepsilon>0$, a subset $P$ of $\Gamma$ is called an \emph{closed $(n,\varepsilon)$-separated set} of $\Gamma$ if for any $x,y\in P$, $d_n(x,y)\ge\varepsilon$. Denote by $\bar s_n(\Gamma,\varepsilon)$ the largest cardinality for any closed $(n,\varepsilon)$-separated set of  $\Gamma$. By using the definitions, we have the following properties:

\begin{itemize}
\item Given $\varepsilon>0$ and $n\in\NN$, we have
$$s_n(\Gamma,\varepsilon)\le \bar s_n(\Gamma,\varepsilon)\le s_n(\Gamma,\varepsilon/2).$$
\item ${\rm h}(f,\Gamma)=\lim_{\varepsilon\to0}\limsup_{n\to\infty}(1/n)\log \bar s_n(\Gamma,\varepsilon)$.

\end{itemize}
\bigskip

Now we can give the proof of Proposition~\ref{Pro:almostevery}.
\begin{proof}[Proof of Proposition~\ref{Pro:almostevery}]
For fixed $\alpha>0$, we define the local entropy function
$$H(x)={\rm h}(f,B_{\pm\infty}(x,\alpha)).$$
We need to verify that $H(x)$ is measurable. After that, by Lemma~\ref{Lem:entropy-nStep}, we have $H(x)=H(f(x))$, and then by the ergodicity of $\mu$, we have that $H(x)$ is constant for $\mu$-a.e. $x$. By the same reason, we have that ${\rm h}(f^{-1},B_{\pm\infty}(x,\alpha))$ is also a constant for $\mu$-a.e. $x$. Then by Proposition~\ref{Pro:inverse-entropy}, one can conclude this proposition.

To verify $H$ is a $\mu$-measurable, it is enough to check that given $\varepsilon>0$, the set 
$$L_a=\{x:~\limsup_{n\to\infty}\frac{1}{n}\log \bar s_n(B_{\pm\infty}(x,\alpha),\varepsilon)>a\}$$
is $\mu$-measurable for any $a>0$. We have that
$$L_a=\bigcup_{k\in\NN}\bigcap_{n\in\NN}\bigcup_{m\ge n}\{x:~\bar s_m(B_{\pm\infty}(x,\alpha),\varepsilon)\ge {\rm e}^{(a+1/k) m}\}.$$
Thus it is enough to show that the set
$$L_{a,m}=\{x:~\bar s_m(B_{\pm\infty}(x,\alpha),\varepsilon)\ge{\rm e}^{a m}\}$$
is $\mu$-measurable for any $a>0$ and $m\in\NN$. This can be deduced the fact that $L_{a,m}$ is closed by the upper semi continuity of bi-infinity Bowen balls.

In fact, assume that there is a sequence $\{x_n\}\subset L_{a,m}$ such that $\lim_{n\to\infty}x_n=x$, we need to show that $x\in L_{a,m}$. For each $x_n$, there is a closed $(m,\varepsilon)$-separated set $P_n=\{y_n^1,y_n^2,\cdots,y_n^{N_n}\}$ contained in $B_{\pm\infty}(x_n,\alpha)$ whose cardinality is $N_n=\# P_n\ge [{\rm e}^{a m}]$. 

By taking a subsequence if necessary, one can assume that for any $1\le j\le [{\rm e}^{a m}]$, $\lim_{n\to\infty}y_n^j=y^j\in B_{\pm\infty}(x,\alpha)$. Moreover, we have that for any $1\le i<j\le [{\rm e}^{a m}]$, $d_m(y^i,y^j)\ge \varepsilon$. This implies that there is a closed $(m,\varepsilon)$-separated set contained in $B_{\pm\infty}(x,\alpha)$ whose cardinality is at least $[{\rm e}^{am}]$, and hence ${\rm e}^{am}$. Consequently, $x\in L_{a,m}$. The proof is complete now.
\end{proof}

\section{Upper semi continuity of entropies}

%
%
%
%

\bigskip

In this section, we will mainly prove the upper semi continuity of the metric entropy w.r.t. invariant measures. Actually, we prove the following stronger result.
\begin{Theorem}\label{Thm:usc}
Assume that a compact invariant set $\Lambda$ admits a dominated splitting $T_\Lambda M=E\oplus F$ without mixed behavior. If there is a sequence of diffeomorphisms $\{f_n\}$ and  a sequence of invariant measures $\mu_n$ such that each $\mu_n$ is an invariant measure of $f_n$ and supported on a compact invariant set $\Lambda_n$ of $f_n$, and 
$$\lim_{n\to\infty}f_n=f,~~~\lim_{n\to\infty}\mu_n=\mu,~~~\limsup_{n\to\infty}\Lambda_n\subset\Lambda,$$
then
$$\limsup_{n\to\infty}{\rm h}_{\mu_n}(f_n)\le {\rm h}_{\mu}(f).$$

\end{Theorem}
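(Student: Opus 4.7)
The overall strategy is the classical Misiurewicz--Bowen scheme: dominate $h_{\mu_n}(f_n)$ by a partition-entropy term plus a tail-entropy term, and pass both to the limit using Lemma~\ref{Lem:finite-approximation}. Throughout, fix $\varepsilon>0$.

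\emph{Step 1 (uniformly small tail entropy).} The crux is to produce $\alpha>0$ and $N_0\in\NN$ such that
$$\sup_{x\in\Lambda_n}{\rm h}(f_n,B^{f_n}_{\pm\infty}(x,\alpha))<\varepsilon\qquad\text{for all } n\ge N_0.$$
The dominated splitting $T_\Lambda M=E\oplus F$ extends to a neighborhood $U\supset\Lambda$ and is $C^1$-stable, so for $n$ large $\Lambda_n\subset U$ carries a dominated splitting for $f_n$. Suppose the estimate fails. Passing to subsequences, one obtains ergodic measures $\nu_n$ of $f_n$, with $h_{\nu_n}(f_n)\ge\varepsilon/2$, whose supports shrink into bi-infinite Bowen balls $B^{f_n}_{\pm\infty}(x_n,\alpha_n)$ with $\alpha_n\to 0$ (the variational principle applied to the orbit closure of such a ball provides the $\nu_n$; Propositions~\ref{Pro:inverse-entropy} and~\ref{Pro:almostevery} ensure that positive tail entropy is detected symmetrically for $f_n$ and $f_n^{-1}$, which is what lets us produce an invariant measure with positive entropy inside an arbitrarily thin bi-infinite Bowen ball). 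Any weak-$*$ limit $\nu$ is $f$-invariant and supported on $\Lambda$. Because the $\nu_n$ concentrate on shrinking \emph{bi-infinite} Bowen balls, all Lyapunov exponents of $\nu$ along $F$ must be non-positive and those along $E$ non-negative; combined with the no-mixed-behavior assumption, every exponent of $\nu$ equals zero. Ruelle's inequality (valid on each ergodic component in the $C^1$ dominated setting) then forces ${\rm h}_\nu(f)=0$, contradicting the lower semi-continuity estimate ${\rm h}_\nu(f)\ge\varepsilon/2$ coming from $\nu_n\to\nu$.

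\emph{Step 2 (partition-entropy approximation).} With $\alpha$ from Step~1, choose a regular partition $\mathcal{B}$ with $\|\mathcal{B}\|<\alpha$ and $\mu(\partial B)=0$ for every $B\in\mathcal{B}$. A standard Bowen-type refinement argument (covering elements of $\bigvee_{-m}^{m}f_n^{-i}\mathcal{B}$ via bi-infinite Bowen balls of radius $\alpha$) yields
$$h_{\mu_n}(f_n)\le h_{\mu_n}(f_n,\mathcal{B})+\sup_{x\in\supp\mu_n}{\rm h}(f_n,B^{f_n}_{\pm\infty}(x,\alpha))\le h_{\mu_n}(f_n,\mathcal{B})+\varepsilon.$$
Now fix $N$ with $\tfrac{1}{N}{\rm H}_\mu(\bigvee_{i=0}^{N-1}f^{-i}\mathcal{B})\le h_\mu(f,\mathcal{B})+\varepsilon$. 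Lemma~\ref{Lem:finite-approximation} applied with this $N$ and $\mathcal{B}$ gives, for $n$ large,
$$h_{\mu_n}(f_n,\mathcal{B})\le\frac{1}{N}{\rm H}_{\mu_n}\!\Big(\bigvee_{i=0}^{N-1}f_n^{-i}\mathcal{B}\Big)\le\frac{1}{N}{\rm H}_{\mu}\!\Big(\bigvee_{i=0}^{N-1}f^{-i}\mathcal{B}\Big)+\varepsilon\le h_\mu(f)+2\varepsilon.$$
Combining the two displayed inequalities and sending $\varepsilon\to 0$ yields $\limsup_{n\to\infty}h_{\mu_n}(f_n)\le h_\mu(f)$.

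\emph{Main obstacle.} Step~1 is where the real work lies. The difficulty is that the tail-entropy bound must be uniform across the perturbed systems $f_n$, yet no-mixed-behavior is assumed only for $f$ on $\Lambda$. The argument must exploit both parts of the hypothesis $\limsup_{n\to\infty}\Lambda_n\subset\Lambda$ (so that accumulation measures sit on $\Lambda$) and the bi-infinite nature of the Bowen balls together with Propositions~\ref{Pro:inverse-entropy}--\ref{Pro:almostevery} (so that the no-mixed-behavior hypothesis, which a priori only rules out $E$-positive and $F$-negative exponents, pins down \emph{all} exponents of the accumulation measure to be zero, at which point Ruelle's inequality kills the entropy).
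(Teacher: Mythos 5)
Your Step 2 matches the paper's argument (Bowen's tail-entropy inequality from \cite[Theorem 3.5]{Bow72} plus Lemma~\ref{Lem:finite-approximation}), and you correctly identify Step 1, uniform smallness of the tail entropy over the perturbed systems, as the crux. That Step 1 is exactly what Lemma~\ref{Lem:local} supplies. But your argument for Step 1 has two genuine gaps.

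First, the concluding contradiction rests on ``the lower semi-continuity estimate $h_\nu(f)\ge\varepsilon/2$ coming from $\nu_n\to\nu$.'' Metric entropy is not lower semi-continuous as a function of the measure, nor of the pair $(f_n,\nu_n)$; and the inequality $h_\nu(f)\ge\limsup_n h_{\nu_n}(f_n)$ that you would actually need is \emph{upper} semi-continuity, which is precisely the content of the theorem being proved. Invoking it here is circular.

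Second, the step that is supposed to force all Lyapunov exponents of the limit measure to vanish is unjustified. If $h(f_n,B^{f_n}_{\pm\infty}(x_n,\alpha_n))\ge\varepsilon$, the variational principle produces an invariant measure with entropy $\ge\varepsilon$ supported on the \emph{orbit closure} of the bi-infinite Bowen ball, not on the ball itself; that orbit closure can be large (it may well be all of $\Lambda_n$), so the assertion that ``the $\nu_n$ concentrate on shrinking bi-infinite Bowen balls'' and hence that $\nu$ has zero exponents along both bundles does not follow. The geometric fact you are reaching for (positive $F$-expansion at $x$ forces $B_{\pm\infty}(x,\alpha)$ into the $E$-plaque, whose entropy is small by no-mixed-behavior) is true, but it has to be proved pointwise and quantitatively. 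That is what the paper does in Lemma~\ref{Lem:local}: for each ergodic measure, domination forces either average $E$-contraction or average $F$-expansion; Pliss's lemma (Lemma~\ref{Lem:pliss}) upgrades the average to uniform along a full-measure set of orbits; Lemma~\ref{Lem:expansion} then shows the bi-infinite Bowen ball lies inside the corresponding plaque; and Lemma~\ref{Lem:entropy-plaque} bounds the plaque entropy using the no-mixed-behavior hypothesis. Propositions~\ref{Pro:inverse-entropy}--\ref{Pro:almostevery} and \cite[Proposition 2.5]{LVY13} are what reduce the forward tail entropy $h_\alpha$ appearing in Bowen's inequality to the bi-infinite quantity that this argument controls, for both $f$ and $f^{-1}$. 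Your soft compactness argument does not substitute for this chain, and as written Step 1 does not go through.
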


We first give some consequences of Theorem~\ref{Thm:usc}, and then give its proof.

\subsection{Consequences of Theorem~\ref{Thm:usc}}

One says that the entropy function is \emph{upper semi continuous w.r.t. the measures} if for any measure $\mu$ and any sequence of measures $\mu_n$ such that $\lim_{n\to\infty}\mu_n=\mu$, then $\limsup_{n\to\infty}{\rm h}_{\mu_n}(f)\le {\rm h}_{\mu}(f)$.

\begin{Corollary}\label{Cor:usc}

Assume that a compact invariant set $\Lambda$ admits a dominated splitting $T_\Lambda M=E\oplus F$ without mixed behavior. Then the metric entropy is upper semi continuous w.r.t. the measures.

\end{Corollary}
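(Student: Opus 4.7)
The plan is to obtain Corollary~\ref{Cor:usc} as a direct specialization of Theorem~\ref{Thm:usc}. Fix an $f$-invariant probability measure $\mu$ supported on $\Lambda$ and a sequence of $f$-invariant measures $\mu_n$, also supported on $\Lambda$, with $\mu_n \to \mu$ in the weak-$*$ topology. I would set $f_n := f$ and $\Lambda_n := \Lambda$ for every $n$; then $f_n \to f$ trivially, $\mu_n \to \mu$ by assumption, and $\limsup_{n\to\infty} \Lambda_n = \Lambda \subset \Lambda$. Since $\Lambda$ admits by hypothesis a dominated splitting $T_\Lambda M = E \oplus F$ without mixed behavior, Theorem~\ref{Thm:usc} applies and gives $\limsup_{n\to\infty} h_{\mu_n}(f) = \limsup_{n\to\infty} h_{\mu_n}(f_n) \le h_\mu(f)$, which is precisely the desired upper semi continuity of the metric entropy on $\mathcal{M}(f|_\Lambda)$.

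The two auxiliary points I would check are, first, that the weak-$*$ limit $\mu$ is again supported on $\Lambda$: for any open $U$ disjoint from the closed set $\Lambda$ one has $\mu_n(U) = 0$ for all $n$, so by the Portmanteau theorem $\mu(U) \le \liminf_n \mu_n(U) = 0$, whence $\supp\mu \subset \Lambda$. Second, that Theorem~\ref{Thm:usc} indeed admits the degenerate perturbation setup $f_n \equiv f$, $\Lambda_n \equiv \Lambda$, which is immediate from its phrasing. There is no genuine obstacle at this stage: all of the substantive work of controlling entropy along dominated splittings without mixed behavior has already been done inside Theorem~\ref{Thm:usc}, and the corollary is a one-line consequence obtained by taking the constant sequences $f_n = f$ and $\Lambda_n = \Lambda$.
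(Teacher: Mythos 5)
Your proof is correct and follows exactly the route the paper intends: Corollary~\ref{Cor:usc} is obtained from Theorem~\ref{Thm:usc} by the constant specialization $f_n\equiv f$ and $\Lambda_n\equiv\Lambda$, which is what the paper means by ``deduced directly.'' Your additional check that the weak-$*$ limit remains supported on $\Lambda$ is a reasonable sanity check but not a genuine obstacle, as you note.
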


The corollary can be deduced from Theorem~\ref{Thm:usc} directly.

\bigskip

The upper semi continuity of the entropy function can be applied in thermodynamical formalism. For any continuous function $\varphi$, the pressure of $\varphi$ is defined by
$$P(\varphi)=\sup_{\mu~\textrm{invariant}}\{{\rm h}_\mu(f)+\int \varphi{\rm d}\mu\}.$$
A measure $\mu$ is called an \emph{equilibrium state} of $\varphi$ if $P(\varphi)={\rm h}_\mu(f)+\int \varphi{\rm d}\mu$. By the upper semi continuity, we have the following corollary directly:
\begin{Corollary}\label{Cor}

Assume that a compact invariant set $\Lambda$ admits a dominated splitting $T_\Lambda M=E\oplus F$ without mixed behavior. Then every continuous function of $\Lambda$ has an equilibrium state on $\Lambda$.
\end{Corollary}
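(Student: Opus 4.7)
The plan is to deduce the corollary as a direct consequence of Corollary~\ref{Cor:usc} combined with the standard soft-analysis argument that an upper semi-continuous functional on a compact space attains its supremum. Specifically, I will argue that the functional $\mu \mapsto h_\mu(f) + \int \varphi \, d\mu$ is upper semi-continuous on the space $\mathcal{M}_f(\Lambda)$ of $f$-invariant Borel probability measures supported on $\Lambda$, and that this space is compact in the weak-$*$ topology, so a maximizer exists.

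First I would recall that $\mathcal{M}_f(\Lambda)$, endowed with the weak-$*$ topology, is a nonempty, convex, compact, metrizable subset of the space of Borel probability measures on $\Lambda$; compactness follows from $\Lambda$ being compact together with the usual Krylov-Bogolyubov argument. For any continuous $\varphi:\Lambda\to\RR$, the assignment
\[
\mu\;\longmapsto\;\int\varphi\,d\mu
\]
is continuous with respect to weak-$*$ convergence by the very definition of that topology. By Corollary~\ref{Cor:usc}, the entropy map $\mu \mapsto h_\mu(f)$ is upper semi-continuous on $\mathcal{M}_f(\Lambda)$ (here I am applying Corollary~\ref{Cor:usc} with $\Lambda_n=\Lambda$ and $f_n=f$ for every $n$, which is a special case of Theorem~\ref{Thm:usc}). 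The sum of an upper semi-continuous function and a continuous function is upper semi-continuous, so
\[
\Phi(\mu)\;\triangleq\;h_\mu(f)+\int\varphi\,d\mu
\]
is upper semi-continuous on $\mathcal{M}_f(\Lambda)$.

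Next I would invoke the elementary fact that an upper semi-continuous real-valued function on a nonempty compact space attains its supremum: take a maximizing sequence $\mu_n\in\mathcal{M}_f(\Lambda)$ with $\Phi(\mu_n)\to\sup\Phi$, extract a weak-$*$ convergent subsequence $\mu_{n_k}\to\mu^*$ using compactness, and apply upper semi-continuity to conclude $\Phi(\mu^*)\ge \limsup_k \Phi(\mu_{n_k}) = \sup\Phi$. Since the supremum over $\mathcal{M}_f(\Lambda)$ equals $P(\varphi|_\Lambda)$, the measure $\mu^*$ is by definition an equilibrium state for $\varphi$ on $\Lambda$.

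There is no genuine obstacle here: all the substantive work is already contained in the upper semi-continuity statement of Corollary~\ref{Cor:usc}, which in turn rests on Theorem~\ref{Thm:usc}. The only point requiring a moment's care is ensuring that Corollary~\ref{Cor:usc} really applies to a sequence $\mu_n\in\mathcal{M}_f(\Lambda)$ converging to $\mu^*\in\mathcal{M}_f(\Lambda)$, and this is immediate by setting all $f_n=f$ and all $\Lambda_n=\Lambda$ in the hypotheses of Theorem~\ref{Thm:usc}, which trivially satisfy $\limsup_n \Lambda_n\subset\Lambda$.
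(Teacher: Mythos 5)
Your proof is correct and follows exactly the route the paper has in mind: the paper simply states that the corollary follows ``directly'' from the upper semi-continuity established in Corollary~\ref{Cor:usc}, and your write-up is just a careful unpacking of that standard compactness argument.
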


Another corollary is the upper semi continuity of topological entropy w.r.t. the diffeomorphisms.

\begin{Corollary}\label{Cor:usc-top}
Assume that a compact invariant set $\Lambda$ admits a dominated splitting $T_\Lambda M=E\oplus F$ without mixed behavior. If $f_n\to f$ as $n\to\infty$ in the $C^1$ topology and $\Lambda_{f_n}$ is a compact invariant set of $f_n$ satisfying $\limsup_{n\to\infty}\Lambda_{f_n}\subset\Lambda$, then 
$$\limsup_{n\to\infty}{\rm h}_{top}(f_n|_{\Lambda_{f_n}}) \le  {\rm h}_{top}(f |_{\Lambda_f}).$$

\end{Corollary}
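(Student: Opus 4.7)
The plan is to reduce Corollary~\ref{Cor:usc-top} to Theorem~\ref{Thm:usc} via the variational principle. First I would extract a subsequence, still denoted $\{f_n\}$, along which the $\limsup$ becomes a genuine limit, so $\lim_{n\to\infty}{\rm h}_{top}(f_n|_{\Lambda_{f_n}})=\limsup_{n\to\infty}{\rm h}_{top}(f_n|_{\Lambda_{f_n}})$. Since each $\Lambda_{f_n}$ is compact and $f_n$-invariant, the variational principle applies to the system $f_n|_{\Lambda_{f_n}}$, so for each $n$ I can choose an $f_n$-invariant probability measure $\mu_n$ with $\mathrm{supp}(\mu_n)\subset\Lambda_{f_n}$ and ${\rm h}_{\mu_n}(f_n)\ge{\rm h}_{top}(f_n|_{\Lambda_{f_n}})-1/n$.

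Next, using the weak-$*$ compactness of the space of Borel probability measures on $M$, I would pass to a further subsequence $\mu_{n_k}\to\mu$. Two standard verifications remain. First, $\mathrm{supp}(\mu)\subset\Lambda$: for any continuous $\varphi$ supported outside a neighborhood $U$ of $\Lambda$, the hypothesis $\limsup_{n\to\infty}\Lambda_{f_n}\subset\Lambda$ gives $\Lambda_{f_{n_k}}\subset U$ for $k$ large, hence $\int\varphi\,d\mu_{n_k}=0$ eventually, so $\int\varphi\,d\mu=0$. Second, $\mu$ is $f$-invariant: for any $\varphi\in C(M)$, the $C^0$ convergence $f_{n_k}\to f$ together with uniform continuity of $\varphi$ yields $\|\varphi\circ f_{n_k}-\varphi\circ f\|_{C^0}\to 0$, and then
\[
\int\varphi\circ f\,d\mu=\lim_{k\to\infty}\int\varphi\circ f_{n_k}\,d\mu_{n_k}=\lim_{k\to\infty}\int\varphi\,d\mu_{n_k}=\int\varphi\,d\mu.
\]

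Now the hypotheses of Theorem~\ref{Thm:usc} are met by the data $(f_{n_k},\mu_{n_k},\Lambda_{f_{n_k}})$, so $\limsup_{k\to\infty}{\rm h}_{\mu_{n_k}}(f_{n_k})\le{\rm h}_\mu(f)$. Combining this with the choice of $\mu_n$ and the variational principle for $f|_\Lambda$ (applicable because $\mathrm{supp}(\mu)\subset\Lambda$), I obtain
\[
\limsup_{n\to\infty}{\rm h}_{top}(f_n|_{\Lambda_{f_n}})=\lim_{k\to\infty}{\rm h}_{top}(f_{n_k}|_{\Lambda_{f_{n_k}}})\le\lim_{k\to\infty}{\rm h}_{\mu_{n_k}}(f_{n_k})\le{\rm h}_\mu(f)\le{\rm h}_{top}(f|_{\Lambda_f}),
\]
which is the desired inequality.

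There is no substantive obstacle in this argument; everything reduces to Theorem~\ref{Thm:usc} together with two standard weak-$*$ continuity facts (support of a limit measure, and $f_n$-invariance passing to $f$-invariance under $C^0$ convergence). The only thing one must be careful with is not assuming that the supremum in the variational principle is attained for $f_n$, which is why the approximation ${\rm h}_{\mu_n}(f_n)\ge{\rm h}_{top}(f_n|_{\Lambda_{f_n}})-1/n$ is taken rather than equality.
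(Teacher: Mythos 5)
Your proof is correct and follows essentially the same route as the paper: pick near-maximizing invariant measures $\mu_n$ for $f_n|_{\Lambda_{f_n}}$ via the variational principle, pass to a weak-$*$ limit $\mu$, and apply Theorem~\ref{Thm:usc} together with the variational principle for $f|_\Lambda$. You spell out the standard verifications (that $\mu$ is $f$-invariant and supported in $\Lambda$) which the paper leaves implicit, but the argument is identical in substance.
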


\begin{proof}

For each $n$, we take an ergodic measure $\mu_n$ supported on $\Lambda_{f_n}$ such that
$${\rm h}_{\mu_n}(f_n|_{\Lambda_{f_n}})>{\rm h}_{top}(f_n|_{\Lambda_{f_n}})-\frac 1n.$$

By taking a subsequence if necessary, we assume that $\lim_{n\to\infty}\mu_n=\mu$ for some invariant measure $\mu$ of $f$. By Theorem~\ref{Thm:usc}, we have that
$${\rm h}_{top}(f|_{\Lambda_f})\ge {\rm h}_{\mu}(f|_{\Lambda_f}) \ge \limsup_{n\to\infty}{\rm h}_{\mu_n}(f_n|_{\Lambda_{f_n}})=\limsup_{n\to\infty}\left({\rm h}_{\mu_n}(f_n|_{\Lambda_{f_n}})+\frac 1n\right)=\limsup_{n\to\infty}{\rm h}_{top}(f_n|_{\Lambda_{f_n}}).$$

\end{proof}

\begin{proof}[Proof of Theorem~\ref{Thm:conjecture}]

Now we consider a diffeomorphism $f$ such that $M$ admits a dominated splitting without mixed behavior. There is a neighborhood $\cal U$ of $f$ such that any $g\in\cal U$ is isotropic to $f$. Thus we have
$$\max_{0\le i\le d}{\rm sp}(f_{*,i})= \max_{0\le i\le d}{\rm sp}(g_{*,i}),$$
For any $\varepsilon>0$, we choose a $C^\infty$ diffeomorphism $g\in\cal U$ such that by applying Yomdin's result \cite{Yom87}, we have
$${\rm h}_{top}(f)>{\rm h}_{top}(g)-\varepsilon\ge \max_{0\le i\le d}{\rm sp}(g_{*,i})-\varepsilon=\max_{0\le i\le d}{\rm sp}(f_{*,i})-\varepsilon.$$
Then by the arbitrariness of $\varepsilon$, one can complete the proof.

\end{proof}

\subsection{Uniformity on dominated splittings without mixed behavior}


\begin{Lemma}\label{Lem:uniformity}
Assume that $\Lambda$ admits a dominated splitting without mixed behavior. Then for any $\beta>0$, there is $N=N(\beta)\in\NN$ and a neighborhood $\cal U$ of $f$ such that for any $g\in\cal U$ and a neighborhood $U$ of $\Lambda$, for any compact invariant set $\Lambda_g$ of $g$ that is contained in $U$, we have that $\Lambda_g$ admits a dominated splitting 
$$T_{\Lambda_g}M=E_g\oplus F_g,$$
and 
$$\|Dg^N|_{E_g(x)}\|\le (1+\beta)^N,~~~\|Dg^{-N}|_{F_g(x)}\|\le (1+\beta)^N.$$

\end{Lemma}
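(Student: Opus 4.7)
The plan is to prove the uniform estimate in three steps: extending the dominated splitting by $C^1$-persistence, establishing an asymptotic rate estimate on the unperturbed system, and then upgrading to $(g,\Lambda_g)$ by joint continuity and compactness. For the first step, standard persistence theory of dominated splittings gives that $E\oplus F$ extends to a dominated splitting on the maximal invariant set $\Lambda(U_0)$ of a small isolating neighborhood $U_0$ of $\Lambda$, and that for $g$ sufficiently $C^1$-close to $f$ and any compact invariant $\Lambda_g\subset U_0$ there is a dominated splitting $T_{\Lambda_g}M=E_g\oplus F_g$ whose bundles $E_g(x),F_g(x)$ depend continuously on $(g,x)$.

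The analytic heart is the estimate
\[
\lim_{N\to\infty}\frac{1}{N}\sup_{x\in\Lambda}\log\|Df^N|_{E(x)}\|\le 0,
\]
together with its symmetric counterpart for $\|Df^{-N}|_{F(x)}\|$. Setting $b_N:=\sup_{x\in\Lambda}\log\|Df^N|_{E(x)}\|$, invariance of $E$ and submultiplicativity of operator norms give $b_{N+M}\le b_N+b_M$, so Fekete's lemma yields $b_N/N\to\lambda^*:=\inf_N b_N/N$. I would then identify $\lambda^*$ with $\sup_\mu\chi^E_{\max}(\mu)$, where $\mu$ ranges over $f$-invariant probability measures supported on $\Lambda$ and $\chi^E_{\max}(\mu)$ denotes the top Lyapunov exponent of $Df|_E$ with respect to $\mu$. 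The easy direction $\sup_\mu\chi^E_{\max}(\mu)\le\lambda^*$ is immediate from Kingman's subadditive ergodic theorem. For the reverse direction, I would pick $x_N\in\Lambda$ nearly realizing $b_N$, form the empirical measures $\mu_N=(1/N)\sum_{i=0}^{N-1}\delta_{f^i(x_N)}$, extract a weak-$*$ limit $\mu$, and combine subadditivity in $k$ of $\log\|Df^k|_{E}\|$ with weak-$*$ continuity of $\int\log\|Df^k|_E\|\,d\mu$ to obtain $\chi^E_{\max}(\mu)\ge\lambda^*$. The no-mixed-behavior hypothesis then forces $\chi^E_{\max}(\mu)\le 0$, whence $\lambda^*\le 0$. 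The estimate for $F$ follows by applying the same argument to $f^{-1}$ with the dominated splitting $F\oplus E$, whose no-mixed-behavior is part of the hypothesis.

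Given both asymptotic estimates, I fix $N$ large enough that
\[
\sup_{x\in\Lambda}\|Df^N|_{E(x)}\|<(1+\beta/2)^N\quad\text{and}\quad \sup_{x\in\Lambda}\|Df^{-N}|_{F(x)}\|<(1+\beta/2)^N.
\]
By joint continuity of $(g,x)\mapsto\|Dg^{N}|_{E_g(x)}\|$ and $(g,x)\mapsto\|Dg^{-N}|_{F_g(x)}\|$ on the persistence domain, together with compactness of $\Lambda$, I can shrink the $C^1$-neighborhood of $f$ to some $\mathcal{U}$ and shrink the spatial neighborhood of $\Lambda$ to some $U\subset U_0$ so that the strict bound on $\Lambda$ propagates to the non-strict bound with exponent $(1+\beta)^N$ for every $g\in\mathcal{U}$ and every $x$ in the maximal $g$-invariant set inside $U$. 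Any compact invariant $\Lambda_g\subset U$ is contained in that maximal invariant set, yielding the claimed uniform estimate.

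The main obstacle is the identification $\lambda^*=\sup_\mu\chi^E_{\max}(\mu)$ in the middle step. The forward inequality is routine, but the converse requires either citing a subadditive variational principle or carefully executing the weak-$*$ extraction argument: the delicate point is that $\int\log\|Df^k|_E\|\,d\mu$ is only an upper bound for $k\,\chi^E_{\max}(\mu)$, so one must exploit subadditivity of $k\mapsto\int\log\|Df^k|_E\|\,d\mu$ and pass to the infimum in $k$ rather than reading the exponent off a single $k=1$ integral. Once this identification is granted, the persistence and continuity parts of the proof are standard.
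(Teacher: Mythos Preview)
Your proposal is correct and follows essentially the same route as the paper. The paper invokes \cite{Cao03} to obtain the uniform time $N$ with $\|Df^N|_{E(x)}\|\le(1+\beta/2)^N$ and $\|Df^{-N}|_{F(x)}\|\le(1+\beta/2)^N$ on $\Lambda$, and then concludes by persistence of the dominated splitting and continuity of the bundles; your middle step (Fekete plus the weak-$*$ extraction argument identifying $\lambda^*$ with $\sup_\mu\chi^E_{\max}(\mu)$) is precisely the content of \cite{Cao03}, so you are unpacking the citation rather than giving a different argument.
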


\begin{proof}
By the main techniques in \cite{Cao03}, for $\beta/2$, there is $N>0$ such that for any $x\in\Lambda$, we have
$$\|Df^N|_{E(x)}\|\le (1+\beta/2)^N,~~~\|Df^{-N}|_{F(x)}\|\le (1+\beta/2)^N.$$
Thus there is a neighborhood $\cal U$ of $f$ such that for any $g\in\cal U$, if a compact invariant set $\Lambda_g$ of $g$ is contained in a small neighborhood of $\Lambda$, then $\Lambda_g$ has a dominated splitting $T_{\Lambda_g}M=E_g\oplus F_g$. By shrinking $\cal U$ and $U$ if necessary, we have that $E_g$ and $F_g$ are close to $E$ and $F$, respectively.
Thus for any $x\in\Lambda_g$, we have
$$\|Dg^N|_{E_g(x)}\|\le (1+\beta)^N,~~~\|Dg^{-N}|_{F_g(x)}\|\le (1+\beta)^N.$$

\end{proof}

%
%
%

\subsection{The plaque family theorem and Pliss Lemma}

For dominated splittings, we have local invariant center stable manifolds and local invariant center unstable manifolds \cite[Theorem 5.5]{HPS77}.

For $i\in\NN$, denote by $D^i(1)$ be the unit ball in $\RR^i$ and ${\rm Emb}(D^i(1),M)$ is the space of $C^1$ embeddings from $D^i(1)$ to $M$.

\begin{Lemma}\label{Lem:plaque}
Let $\Lambda$ be a compact invariant set with a dominated splitting $T_\Lambda M=E\oplus F$, where $\dim E=i$.  Then there is a neighborhood $\cal U$ of $f$ and a neighborhood $U$ of $\Lambda$ such that for any $g\in\cal U$, for any compact invariant set $\Lambda_g$ contained in $U$, denoting the dominated splitting of $\Lambda_g$ by $E_g\oplus F_g$, then there is a map $\Theta_g:\Lambda_g\to {\rm Emb}(D^i(1),M)$ such that when one denotes $W^{E_g}_\varepsilon(x,g)=\Theta_g(x)(D^i(\varepsilon))$, we have
\begin{itemize}

\item Invariance: for any $\varepsilon>0$, there is $\delta>0$ such that for any $x\in\Lambda_g$, we have $g(W^{E_g}_\delta(x,g))\subset W^{E_g}_\varepsilon(g(x),g)$.

\item Tangency: for any $x\in\Lambda_g$, we have $T_x W^{E_g}_\varepsilon(x,g)=E_g(x)$.

\item Continuity: when $g_n\to g$ as $n\to\infty$, $x_n\in\Lambda_{g_n}$ such that $x_n\to x\in\Lambda$, then $W^{E_{g_n}}(x_n,g_n)\to W^{E_g}(x,g)$.

\end{itemize}

We can also get the manifolds $\{W^F(x,g)\}_{x\in\Lambda_g}$ tangent to $F_g$.

\end{Lemma}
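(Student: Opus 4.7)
The plan is to deduce this statement from the plaque family theorem of Hirsch--Pugh--Shub \cite{HPS77}, treating the extra content as a uniformity statement across a $C^1$-neighborhood of $f$ and across all invariant sets sitting in a fixed neighborhood of $\Lambda$.

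First, I would globalize the splitting. Because the continuous bundles $E, F$ are defined over the compact set $\Lambda$ and the Grassmannian of $TM$ is a smooth bundle, one can continuously extend $E$ and $F$ to continuous (not necessarily $Df$-invariant) subbundles $\tilde E, \tilde F$ on a neighborhood $U_0$ of $\Lambda$ with $T_{U_0}M = \tilde E \oplus \tilde F$. The $C^1$-openness of dominated splittings (a standard consequence of the invariant cone-field characterization) then provides a $C^1$-neighborhood $\cal U$ of $f$ and a neighborhood $U\subset U_0$ such that, for every $g\in\cal U$ and every compact $g$-invariant set $\Lambda_g\subset U$, the set $\Lambda_g$ carries a dominated splitting $T_{\Lambda_g}M = E_g\oplus F_g$ with $E_g(x), F_g(x)$ uniformly close to $\tilde E(x), \tilde F(x)$ and with the domination constants uniform in $g$ and $\Lambda_g$. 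At this stage one can also assume, by passing to an adapted Riemannian metric, that the domination is realized in a single iterate.

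Second, I would set up the graph transform uniformly. Using the exponential map to pass to local charts at each $x\in\Lambda_g$, a potential plaque through $x$ tangent to $E_g(x)$ can be described as the graph of a $C^1$ map $\varphi_x : D^i(1)\subset E_g(x)\to F_g(x)$ with $\varphi_x(0)=0$ and $D\varphi_x(0)=0$. The diffeomorphism $g$ induces a graph transform $\mathcal G_g$ on the Banach space of such continuous families $\{\varphi_x\}_{x\in\Lambda_g}$; by domination (and the adapted metric), $\mathcal G_g$ is a contraction with rate bounded away from $1$ uniformly for $g\in\cal U$ and $\Lambda_g\subset U$. Its unique fixed point yields the embeddings $\Theta_g(x):D^i(1)\to M$ and hence the plaques $W^{E_g}_\varepsilon(x,g)=\Theta_g(x)(D^i(\varepsilon))$. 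Tangency is built into the fixed point equation; invariance follows by restricting to a smaller radius $\delta(\varepsilon)$ on which $g$-images of plaques sit in the larger plaques at $g(x)$, a standard consequence of continuity of $Dg$ and compactness. The plaques $W^{F_g}(x,g)$ are constructed analogously for $g^{-1}$.

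Finally, continuity in $(g,x)$ is a consequence of continuity of the data feeding into the graph transform. If $g_n\to g$ in $C^1$ and $x_n\in\Lambda_{g_n}$ satisfies $x_n\to x\in\Lambda$, then by the cone-field characterization $E_{g_n}(x_n)\to E(x)$ and $F_{g_n}(x_n)\to F(x)$, so in the local chart at $x$ the graph transforms associated with $(g_n,x_n)$ converge to the one associated with $(g,x)$ as contractions on a common Banach space; their fixed points therefore converge in the $C^1$-topology, giving $W^{E_{g_n}}(x_n,g_n)\to W^{E_g}(x,g)$. The main obstacle in this plan is not the existence of plaques for a single $g$, which is \cite[Theorem 5.5]{HPS77} verbatim, but arranging the graph-transform setup so that the underlying Banach space and the contraction rate are common for all $g\in\cal U$ and all $\Lambda_g\subset U$; this is precisely why one first extends the splitting to the fixed neighborhood $U_0$ and exploits the $C^1$-openness of domination with uniform constants.
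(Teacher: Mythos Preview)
Your proposal is correct and follows the same route the paper points to: the paper does not give an independent proof but simply invokes \cite[Theorem~5.5]{HPS77} and adds the remark that the continuity item, while not stated in the original plaque family theorem, can be read off from its proof. Your sketch---extending the bundles to a neighborhood, using the cone-field criterion to get uniform domination for nearby $g$ and nearby invariant sets, and then running the graph transform as a uniform contraction whose fixed point depends continuously on the data---is exactly how one would unpack that remark, so you have in fact supplied the details the paper leaves implicit.
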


\begin{Remark}

We notice that the continuity is not stated in the original version of the plaque family theorem. From the proof of the plaque family theorem, one can know this property.

\end{Remark}

We have the following version of Pliss lemma \cite{Pli72} that is useful to get uniform estimations in some non-uniform setting. Recall that ${\rm m}(A)$ is the mini-norm of a linear isomorphism $A$, i.e., ${\rm m}(A)=\inf_{\|v\|=1}\|Av\|$.

\begin{Lemma}\label{Lem:pliss}

Assume that $\Lambda$ is a compact invariant set with a dominated splitting $T_\Lambda M=E\oplus F$. Given $N\in\NN$ and $\lambda_1>\lambda_2>1$ such that for any $x\in\Lambda$, if
$$\lim_{n\to\infty}\frac{1}{n}\sum_{i=0}^{n-1}\log {\rm m}(Df^N|_{F(f^{iN}(x))})\ge \lambda_1,$$
then there is a point $y$ in the positive orbit of $x$, we have for any $n\in\NN$
$$\frac{1}{n}\sum_{i=0}^{n-1}\log {\rm m}(Df^N|_{F(f^{iN}(y))})\ge \lambda_2.$$

\end{Lemma}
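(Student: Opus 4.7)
The plan is to read this as an application of the classical Pliss lemma to the real sequence
$$a_i \;=\; \log \mathrm{m}\bigl(Df^N|_{F(f^{iN}(x))}\bigr), \qquad i \ge 0.$$
The hypothesis says that the Cesàro average of $\{a_i\}$ is at least $\lambda_1$, and the goal is to produce an integer $k \ge 0$ such that for every $n \ge 1$,
$$\frac{1}{n}\sum_{i=0}^{n-1} a_{k+i} \;\ge\; \lambda_2;$$
once this is in hand, the point $y = f^{kN}(x)$ lies on the positive orbit of $x$ and gives the conclusion directly.

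The key step is the selection of $k$ by a minimum principle on drifted partial sums. Define
$$S_n \;=\; \sum_{i=0}^{n-1}(a_i - \lambda_2), \qquad S_0=0.$$
Since the hypothesis gives $\liminf_{n\to\infty} S_n/n \ge \lambda_1 - \lambda_2 > 0$, we have $S_n \to +\infty$, so the sequence $\{S_n\}_{n\ge 0}$ attains a global minimum at some finite index $k \ge 0$. For every $m > k$ we then have $S_m - S_k \ge 0$, which rearranges to
$$\sum_{i=k}^{m-1}(a_i - \lambda_2) \;\ge\; 0,$$
i.e.\ the required lower bound on the Cesàro averages of $\{a_{k+i}\}_{i \ge 0}$, valid for every $n = m-k \ge 1$. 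Setting $y = f^{kN}(x)$ gives the statement.

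I do not expect a serious obstacle here: the dominated splitting is only used to make sure that $Df^N|_{F}$ is well defined along the orbit of $x$, and the result is essentially a combinatorial statement about bounded real sequences that is the content of Pliss's original argument. The only thing to verify carefully is the translation between the $N$-step cocycle appearing in the statement and the scalar sequence $\{a_i\}$, namely that the indices $iN$ in the hypothesis and conclusion match up correctly with the forward orbit under $f$ itself (which they do, because $y = f^{kN}(x)$ and the sums on both sides are taken along the $N$-step subsequence starting from $y$).
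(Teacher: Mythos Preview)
Your proposal is correct and is exactly the classical Pliss argument: the paper does not supply its own proof of this lemma but simply cites \cite{Pli72}, and your minimum-of-drifted-partial-sums argument is the standard proof of that result. The translation to the $N$-step cocycle via $y=f^{kN}(x)$ is handled correctly.
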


We have the following estimations on centre-unstable manifolds. The proof is a simple application of the mean value theorem, hence omitted.
\begin{Lemma}\label{Lem:expansion}
Assume that $\Lambda$ is a compact invariant set with a dominated splitting $T_\Lambda M=E\oplus F$. Given $n\in\NN$, for $\lambda_1>\lambda_2>1$, there are $C=C(\lambda_1,\lambda_2)$ and $\alpha_0=\alpha_0(\lambda_1,\lambda_2)$, for any $x\in\Lambda$ satisfying
$$\prod_{i=0}^{n-1}{\rm m}(Df^N|_{F(f^{iN}(y))})\ge \lambda_1^n,~~~\forall n\in\NN$$
for any $y$ and for any $n\in\NN$ such that
$$f^\ell(y)\in W^F_{\alpha_0}(f^\ell(x)),  ~~~\forall 0\le\ell \le n,$$
then we have
$$d(f^n(x),f^n(y))\ge C\lambda_2^n d(x,y).$$

\end{Lemma}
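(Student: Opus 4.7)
The plan is to combine the plaque family of Lemma~\ref{Lem:plaque} with the mean value theorem, transferring the pointwise $F$-expansion of $Df^N$ into expansion of intrinsic distances along the $W^F$-leaves. Let $d_W$ denote the intrinsic Riemannian distance inside a plaque.

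First, I would fix $\eta>0$ so small that $\lambda_1/(1+\eta)>\lambda_2$ (up to the convention for the exponent). By the tangency $T_xW^F(x)=F(x)$, by the $C^1$-continuity of the plaque family on the compact set $\Lambda$ (Lemma~\ref{Lem:plaque}), and by continuity of $Df^{-N}$, one may choose $\alpha_0>0$ so that for every $x\in\Lambda$ and every $z\in W^F_{\alpha_0}(x)$ the tangent plane $T_zW^F(x)$ lies in a narrow cone around $F(x)$ and, as a consequence,
$$\|Df^{-N}|_{T_zW^F(x)}\|\le\frac{1+\eta}{{\rm m}(Df^N|_{F(f^{-N}(x))})}.$$
By the invariance clause of the plaque family applied to $f^{-N}$ acting on $W^F$, after shrinking to a smaller scale $\alpha_0'<\alpha_0$ I can also ensure $f^{-N}(W^F_{\alpha_0'}(f^N(x)))\subset W^F_{\alpha_0}(x)$ for every $x\in\Lambda$.

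Next I would run the telescoping step. Under the assumption $f^{iN}(y)\in W^F_{\alpha_0'}(f^{iN}(x))$ for $0\le i\le n$, let $\bar\gamma_{i+1}$ be the $W^F$-geodesic inside $W^F_{\alpha_0'}(f^{(i+1)N}(x))$ joining $f^{(i+1)N}(x)$ to $f^{(i+1)N}(y)$; its length equals $d_W(f^{(i+1)N}(x),f^{(i+1)N}(y))$. The pre-image $f^{-N}(\bar\gamma_{i+1})$ lies inside $W^F_{\alpha_0}(f^{iN}(x))$ and joins $f^{iN}(x)$ to $f^{iN}(y)$, so has length at least $d_W(f^{iN}(x),f^{iN}(y))$, while the mean value theorem combined with the estimate above gives
$$\mathrm{length}\bigl(f^{-N}(\bar\gamma_{i+1})\bigr)\le \frac{1+\eta}{{\rm m}(Df^N|_{F(f^{iN}(x))})}\,d_W(f^{(i+1)N}(x),f^{(i+1)N}(y)).$$
Chaining these two inequalities for $i=0,\ldots,n-1$ and invoking the hypothesis $\prod_{i=0}^{n-1}{\rm m}(Df^N|_{F(f^{iN}(x))})\ge\lambda_1^n$ yields
$$d_W(f^{nN}(x),f^{nN}(y))\ge\frac{\prod_{i=0}^{n-1}{\rm m}(Df^N|_{F(f^{iN}(x))})}{(1+\eta)^n}\,d_W(x,y)\ge\Big(\frac{\lambda_1}{1+\eta}\Big)^{\!n}d_W(x,y).$$

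Finally, since the plaques are a continuous family of $C^1$ disks over the compact set $\Lambda$, the intrinsic distance $d_W$ and the ambient distance $d$ are uniformly bilipschitz equivalent on all plaques of radius $\le\alpha_0$; absorbing that bilipschitz constant into a single $C=C(\lambda_1,\lambda_2)$ converts the previous estimate into the claimed bound. The main obstacle I would expect is the two-scale bookkeeping $\alpha_0'<\alpha_0$: since $f^{-N}$ need not preserve a single plaque radius, one must let the geodesics $\bar\gamma_{i+1}$ live at the smaller scale $\alpha_0'$ so that the uniform cone estimate for their $f^{-N}$-images applies at the larger scale $\alpha_0$. Everything else is uniform over $\Lambda$ by compactness together with the continuity of the plaque family and of $Df^{\pm N}$.
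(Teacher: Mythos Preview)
Your argument is correct and matches the paper's approach: the paper itself omits the proof, stating only that it ``is a simple application of the mean value theorem,'' and your detailed implementation---pulling back $W^F$-geodesics by $f^{-N}$, bounding $\|Df^{-N}|_{T_zW^F}\|$ in terms of ${\rm m}(Df^N|_F)$ via the tangency and continuity of the plaque family, telescoping, and finally passing from intrinsic to ambient distance by the uniform bilipschitz equivalence on compact plaques---is exactly the standard way to carry this out. The two-scale bookkeeping $\alpha_0'<\alpha_0$ you flag is the only genuine subtlety, and you handle it correctly.
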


\subsection{The entropy of a plaque}

\begin{Lemma}\label{Lem:entropy-plaque}
Let $\Lambda$ be a compact invariant set that admits a dominated splitting $T_\Lambda M=E\oplus F$ without mixed behavior. For any $\varepsilon>0$, there is a neighborhood $\cal U$ of $f$ and a neighborhood $U$ of $\Lambda$ and $\alpha>0$ such that for any $g\in\cal U$ and for any point $x\in\Lambda_g\subset U$, we have 
$${\rm h}(g|_{W^E_\alpha(x)})\le\varepsilon,~~~{\rm h}(g^{-1}|_{W^F_\alpha(x)})\le\varepsilon.$$

\end{Lemma}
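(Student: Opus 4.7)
By the symmetry between $(E,f)$ and $(F,f^{-1})$, I focus on proving ${\rm h}(g|_{W^E_\alpha(x)})\le\varepsilon$; the statement for $W^F_\alpha(x)$ under $g^{-1}$ follows by applying the same reasoning to $f^{-1}$. The plan is to use the uniform derivative bound from Lemma~\ref{Lem:uniformity} to cover $W^E_\alpha(x,g)$ by Bowen balls for $g^N$ whose count grows at rate $(\dim E)\log(1+2\beta)$, which can be driven below $\varepsilon$ by shrinking $\beta$.

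First I set $i=\dim E$ and pick $\beta>0$ with $i\log(1+2\beta)<\varepsilon$. Lemma~\ref{Lem:uniformity} gives $N\in\NN$ and neighborhoods $\cal U_0$ of $f$ and $U$ of $\Lambda$ such that $\|Dg^N|_{E_g(y)}\|\le(1+\beta)^N$ for every $g\in\cal U_0$ and $y\in\Lambda_g\subset U$. Invoking Lemma~\ref{Lem:plaque} (shrinking $\cal U_0$ to $\cal U$ and $U$ if needed), the tangency $T_x W^E_\bullet(x,g)=E_g(x)$ together with continuity of $Dg^N$ and of the plaque parameterization $\Theta_g$ let me select $\alpha_0>0$ so that $\|Dg^N|_{T_y W^E_{\alpha_0}(x,g)}\|\le(1+2\beta)^N$ whenever $g\in\cal U$, $x\in\Lambda_g$, and $y\in W^E_{\alpha_0}(x,g)$.

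Next I use the invariance of the plaque family (applied to $g^N$) to pick $\alpha\in(0,\alpha_0]$ small enough to iterate the derivative estimate, obtaining $\|Dg^{jN}|_{T_y W^E_\alpha(x,g)}\|\le(1+2\beta)^{jN}$ along the plaque for the relevant range of $j$. Given $n\in\NN$ and $\delta>0$, I cover the $i$-dimensional disk $W^E_\alpha(x,g)$ (of intrinsic diameter $O(\alpha)$) by $M_n\le C(\alpha(1+2\beta)^{nN}/\delta)^i$ pieces of intrinsic diameter $\delta(1+2\beta)^{-nN}$. The mean value theorem combined with the propagated bound forces the iterate $g^{kN}$ of each such piece (for $0\le k<n$) to have $M$-diameter at most $\delta$, so each piece sits in a single $(n,\delta)$-Bowen ball for $g^N$ centered at its own center point. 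This yields $r_n(W^E_\alpha(x,g),\delta,g^N)\le C(\alpha(1+2\beta)^{nN}/\delta)^i$, hence ${\rm h}(g^N|_{W^E_\alpha(x,g)})\le iN\log(1+2\beta)$ and therefore ${\rm h}(g|_{W^E_\alpha(x,g)})\le i\log(1+2\beta)<\varepsilon$.

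The main obstacle is rigorously propagating the derivative estimate across all iterates. Lemma~\ref{Lem:plaque} only supplies the one-step inclusion $g(W^E_\delta)\subset W^E_\varepsilon$ with $\delta$ depending on $\varepsilon$, and since $(1+\beta)^N>1$ a fixed-size plaque is not literally forward invariant under $g^N$; iterates $g^{jN}(W^E_\alpha(x,g))$ may drift outside the tubular neighborhood of $\Lambda_g$ on which the continuity-based bound is available. I deal with this by exploiting the fact that each small piece $\Pi_j$ (of diameter $\delta(1+2\beta)^{-nN}$) grows to diameter at most $\delta$ under $g^{kN}$ for $k\le n$, so its iterates stay within a $\delta$-ball of the $\Lambda_g$-orbit of its center point; with $\alpha,\alpha_0,\cal U,U$ coordinated appropriately, these iterates remain in the tubular neighborhood where the $(1+2\beta)^N$ bound is valid, and the Bowen-ball count goes through.
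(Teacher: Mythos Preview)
Your proof follows essentially the same approach as the paper: choose $\beta$ with $(\dim E)\log(1+2\beta)<\varepsilon$, invoke Lemma~\ref{Lem:uniformity} for the bound $\|Dg^N|_{E_g}\|\le(1+\beta)^N$, extend this to $\|Dg^N|_{T_yW^E}\|\le(1+2\beta)^N$ on a small plaque by continuity, and then use a volume/covering count to bound the number of Bowen balls by a quantity growing like $(1+2\beta)^{n\dim E}$. The only cosmetic differences are that you pass through $g^N$ and divide by $N$ (the paper absorbs the non-multiple-of-$N$ iterates into a constant $C$) and that you phrase the count as a covering by small pieces rather than as a lower bound on Bowen-ball volume; your explicit discussion of the plaque-drift obstacle is in fact more careful than the paper, which simply writes the derivative estimate under the hypothesis ``if $g^\ell(z)\in W^{E_g}_\alpha(g^\ell(x))$'' without further comment.
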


\begin{proof}

We only prove the case for $W^E$. The result for $W^F$ will be symmetric.

Given $\varepsilon>0$, we take $\beta>0$ such that $(\dim E)\log(1+2\beta)<\varepsilon.$ By Lemma~\ref{Lem:uniformity}, there is $N=N(\beta)\in\NN$ and a neighborhood $\cal U$ of $f$ and a neighborhood $U$ of $\Lambda$ such that for any $g\in\cal U$, for any compact invariant set $\Lambda_g$ of $g$ in $U$, we have that $\Lambda_g$ admits a dominated splitting 
$$T_{\Lambda_g}M=E_g\oplus F_g,$$

Now we have that for any $x\in\Lambda_g$,
$$\|Dg^{N}|_{E_g(x)}\|\le (1+\beta)^N.$$

Thus one can choose $\alpha>0$ small such that for any 
$z\in W^{E_g}_{\alpha}(x)$, we have
$$\|Dg^N|_{T_z W^{E_g}(x)}\|\le (1+2\beta)^N.$$ 
Thus, if we take $C=C(\beta)$ to be 
$$C=\max\{(1+2\beta)\|Df\|,(1+2\beta)^2\|Df^2\|,\cdots,(1+2\beta)^{N-1}\|Df^{N-1}\|\}+2.$$

Then we have for any $z\in W^{E_g}_{\alpha}(x)$ and any $n\in\NN$, if $g^{\ell}(z)\subset W^{E_g}_\alpha(g^\ell(x))$ for any $0\le \ell\le n-1$, then we have that
$$\|Dg^n|_{T_z W^{E_g}(x)}\|\le C(1+2\beta)^n.$$

Fix $\delta>0$. By using the Mean Value Theorem, for any $y,z\in W^{E_g}_{\alpha}(x)$ satisfying $d(y,z)<\delta$, when $f^\ell(y),f^{\ell}(z) \in W^{E_g}_{\alpha}(f^{\ell}x)$ for any $1\le\ell\le n-1$, there is $\xi_n\in W^{E_g}_{\alpha}(x)$ such that
$$d(g^n(y),g^n(z))\le \|Dg^n|_{T_{\xi_n} W^{E_g}(x)}\|d(y,z)\le C(1+2\beta)^n d(y,z).$$

Thus, the $n$-th Bowen ball $B_n(y,\delta)$ contains a ball of radius $\delta/C(1+2\beta)^n$. We consider the volume of the ball $B_n(y,\delta)$, then we have
$${\rm Volume}(B_n(y,\delta))\ge \frac{\delta^{\dim E}}{C^{\dim E}(1+2\beta)^{n \dim E}}.$$
Thus, there are at most
$$\left[\frac{C^{\dim E}\alpha^{\dim E}(1+2\beta)^{n\dim E}}{\delta^{\dim E}}\right]+1$$
disjoint $n$-th Bowen balls contained in $W^{E_g}_{\alpha}(x)$.
This implies the entropy is bounded by
$$\lim_{\delta\to0}\limsup_{n\to\infty}\frac{1}{n}\log\frac{C^{\dim E}\alpha^{\dim E}(1+2\beta)^{n\dim E}}{\delta^{\dim E}}\le (\dim E)\log(1+2\beta)<\varepsilon.$$

\end{proof}

\subsection{Estimation of the local entropy}\label{Sec:esti-localentropy}

We need the following lemma for local entropy.

\begin{Lemma}\label{Lem:local}

Assume that a compact invariant set $\Lambda$ admits a dominated splitting $T_\Lambda M=E\oplus F$ without mixed behavior. Then for any $\varepsilon>0$, there is $\alpha>0$ and a neighborhood ${\cal U}$ of $f$ and a neighborhood $U$ of $\Lambda$ such that for any $g\in \cal U$ and any compact invariant set $\Lambda_g\subset U$ of $g$, we have
$${\rm h}_{\alpha}(g|_{\Lambda_g})\le \varepsilon.$$

\end{Lemma}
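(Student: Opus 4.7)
The plan is to estimate $r_n(B_\infty(x,\alpha),\delta)$ by decomposing the covering of $B_\infty(x,\alpha)$ into a contribution along the ``$E$-direction'', where Lemma~\ref{Lem:entropy-plaque} already gives an $e^{n\varepsilon/2}$-type bound, and a contribution along a transversal ``$F$-slice'', where the forward-Bowen-ball constraint cancels with the $(n,\delta)$-separation condition. Given $\varepsilon>0$, I first choose $\beta>0$ with $(\dim E)\log(1+2\beta)<\varepsilon/2$ and apply Lemma~\ref{Lem:uniformity} to obtain $N\in\NN$ together with neighborhoods $\cal U$ of $f$ and $U$ of $\Lambda$ such that $\|Dg^{N}|_{E_g}\|\le(1+\beta)^{N}$ and $\|Dg^{-N}|_{F_g}\|\le(1+\beta)^{N}$ uniformly on every $\Lambda_g\subset U$ of every $g\in\cal U$. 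After possibly shrinking these and choosing $\alpha>0$ small, Lemma~\ref{Lem:plaque} furnishes continuous plaque families $W^{E_g}_\alpha, W^{F_g}_\alpha$, and Lemma~\ref{Lem:entropy-plaque} gives $h(g|_{W^{E_g}_\alpha(y)})\le\varepsilon/2$ uniformly over $y\in\Lambda_g$.

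Fix $x\in\Lambda_g$. Extending the splitting $E_g\oplus F_g$ continuously to a small neighborhood of $\Lambda_g$ (a standard consequence of domination), the plaque families are defined in that neighborhood and the local transversality of $E_g,F_g$ lets one parameterize a neighborhood of $x$ as an approximate product of $W^{E_g}_\alpha(x)$ with $W^{F_g}_\alpha(x)$. Every $y\in B_\infty(x,\alpha)$ then lies on a plaque $W^{E_g}_\alpha(z)$ whose base point $z$ belongs to the ``$F$-slice''
$$S:=\bigl\{z\in W^{F_g}_\alpha(x):d(g^{k}z,g^{k}x)\le 2\alpha\text{ for all }k\ge 0\bigr\}.$$
Choosing an $(n,\delta/2)$-spanning set $P_n\subset S$ for $g|_S$, a covering argument gives
$$r_n\bigl(B_\infty(x,\alpha),\delta\bigr)\le|P_n|\cdot\sup_{z\in P_n}r_n\bigl(W^{E_g}_\alpha(z),\delta/2\bigr)\le|P_n|\cdot C_1\, e^{n\varepsilon/2},$$
so it suffices to show $|P_n|$ grows sub-exponentially in $n$.

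The needed estimate on $|P_n|$ rests on the following cancellation. For $z\in S$, the Bowen-ball constraint forces the plaque displacement $v_F$ of $z$ from $x$ to satisfy $\|Dg^{k}|_{F_g}\cdot v_F\|\lesssim\alpha$ for every $k\ge 0$; conversely, the $(n,\delta)$-separation of two points $z,z'\in S$ requires $\|Dg^{k}|_{F_g}(v_F-v_F')\|\gtrsim\delta$ for some $k\le n$. Since the same iterated norm $\|Dg^{k}|_{F_g}\|$ bounds both the diameter of $S$ (from above) and the minimal pairwise separation on $S$ (from below), a Mean Value Theorem argument along the plaque $W^{F_g}$, parallel to the proof of Lemma~\ref{Lem:entropy-plaque} but exploiting the bound $\|Dg^{-N}|_{F_g}\|\le(1+\beta)^{N}$ in place of the bound on $E_g$, yields $|P_n|\le C_2(\alpha/\delta)^{\dim F}$, uniform in $n$. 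Combining the two displays gives $h(g,B_\infty(x,\alpha))\le\varepsilon/2<\varepsilon$, as required.

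The main obstacle is making the cancellation rigorous in the non-linear setting and for $\dim F>1$: one must handle the anisotropy of the singular values of $Dg^{k}|_{F_g}$ direction by direction, and control the second-order errors coming from the curvature of $W^{F_g}$ through MVT bounds of the same flavor as those used in Lemma~\ref{Lem:entropy-plaque}. A secondary technical point is that Lemma~\ref{Lem:plaque} only furnishes plaques for points of $\Lambda_g$, which is why the splitting must first be extended to a neighborhood; the uniform bounds from Lemma~\ref{Lem:uniformity} carry over to this extended splitting and supply the exponential estimates needed throughout the counting.
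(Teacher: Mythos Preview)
Your route is quite different from the paper's, which never attempts a product decomposition. Instead the paper reduces, via \cite[Proposition~2.5]{LVY13} and Proposition~\ref{Pro:inverse-entropy}, to showing that for every ergodic $\mu$ on $\Lambda_g$ and $\mu$-a.e.\ $x$ the \emph{bi-infinite} ball $B_{\pm\infty}(x,\alpha)$ lies inside a single plaque. Birkhoff's theorem combined with domination forces one of the two bundle averages to lie outside $(\tfrac12\log\lambda,-\tfrac12\log\lambda)$; when the $F$-average is large, Pliss's lemma (Lemma~\ref{Lem:pliss}) yields an iterate from which $F$ is uniformly expanding along the forward orbit, and Lemma~\ref{Lem:expansion} then forces $B_{\pm\infty}(x,\alpha)\subset W^{E_g}_\alpha(x)$, after which Lemma~\ref{Lem:entropy-plaque} applies directly. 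No transverse counting is ever needed.

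There is a genuine gap in your decomposition step. You assert that for $y\in B_\infty(x,\alpha)$ the holonomy point $z$ on $W^{F_g}_\alpha(x)$ with $y\in W^{E_g}_\alpha(z)$ lies in $S$, i.e.\ satisfies $d(g^kz,g^kx)\le 2\alpha$ for all $k\ge0$. But your only forward control on $z$ comes from $z\in W^{E_g}_\alpha(y)$ together with $\|Dg^N|_{E_g}\|\le(1+\beta)^N$, which gives $d(g^kz,g^ky)\lesssim(1+2\beta)^k\alpha$; this \emph{grows} with $k$, so $z\notin S$ in general. The bound $\|Dg^{-N}|_{F_g}\|\le(1+\beta)^N$ you invoke is a lower bound on forward $F$-contraction and does not address this $E$-drift, and the ``cancellation'' you sketch for $|P_n|$ is really a same-shape convex-body covering, not a Mean-Value-Theorem analogue of Lemma~\ref{Lem:entropy-plaque}. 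This is not the anisotropy obstacle you flag: the real difficulty is that when neither bundle is uniformly hyperbolic along the particular orbit of $x$, the transverse coordinate cannot be confined to a Bowen slice of fixed radius. The paper's detour through ergodic measures and Pliss times exists precisely to manufacture that missing hyperbolicity.
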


\begin{proof}
We recall a result from \cite[Proposition 2.5]{LVY13}. For proving ${\rm h}_{\alpha}(g)\le\varepsilon$, it suffices to prove that for any ergodic invariant measure $\mu$ supported on $\Lambda_g$ of $g$, for $\mu$-a.e. $x$, for the bi-infinite Bowen ball, we have that
$${\rm h}(g,B_{\pm\infty}(x,\alpha))\le\varepsilon.$$
In fact, by Proposition~\ref{Pro:inverse-entropy}, it suffices to prove that
\begin{itemize}

\item either, ${\rm h}(g,B_{\pm\infty}(x,\alpha))\le\varepsilon$ for $\mu$-a.e. $x$;

\item or, ${\rm h}(g^{-1},B_{\pm\infty}(x,\alpha))\le\varepsilon$ for $\mu$-a.e. $x$

\end{itemize}

For the constants of the dominated splitting, we assume that there are $N\in\NN$ and $\lambda\in(0,1)$ (independent of $g$) such that for any $x\in\Lambda_g$, $\|Dg^N|_{E_g(x)}\|\|Dg^{-N}|_{F_g(g^N(x))}\|\le\lambda.$

We define the functions
$$\varphi^{E_g}(x)=\log\|Dg^N|_{E_g(x)}\|,~~~\psi^{F_g}(x)=\log{\rm m}(Dg^N|_{F_g(x)}).$$
$$S_n(\varphi^{E_g}(x))=\frac{1}{n}\sum_{i=0}^{n-1}\varphi^{E_g}(g^{iN}(x)),~~~S_n(\psi^{F_g}(x))=\frac{1}{n}\sum_{i=0}^{n-1}\psi^{F_g}(g^{iN}(x)).$$
By Birkhoff's ergodic theorem, the following two limits exist:
$$\lim_{n\to\infty}S_n(\varphi^{E_g}(x))=\int \varphi^{E_g}(x){\rm d}\mu,~~~\lim_{n\to\infty}S_n(\psi^{F_g}(x))=\int \varphi^{F_g}(x){\rm d}\mu.$$
By domination, at most one of the above quantities is contained in $(\log\lambda/2,-\log\lambda/2)$ for $\mu$-a.e. $x$.

Without loss of generality, one assume that $\lim_{n\to\infty}S_n(\psi^{F_g}(x))$ is not contained in this interval. Thus, we have that $\lim_{n\to\infty}S_n(\psi^{F_g}(x))\ge-\log\lambda/2$. In this case, we will prove that for $\mu$-a.e. $x$,  $B_{\pm\infty}(x,\alpha)\subset W^E_\alpha(x)$, and by applying Lemma~\ref{Lem:entropy-plaque}, one can conclude.

Notice that when $\lim_{n\to\infty}S_n(\psi^{E_g}(x))$ is not in this interval, then one can also prove that  for $\mu$-a.e. $x$,  $B_{\pm\infty}(x,\alpha)\subset W^F_\alpha(x)$. Then we need to apply Proposition~\ref{Pro:inverse-entropy} to prove that for $\mu$-a.e. $x$, we have that ${\rm h}(f^{-1},B_{\pm\infty}(x,\alpha))$ is small.


\bigskip

Take $C=C(\lambda^{-1/4},\lambda^{-1/5})$ and $\alpha_0=\alpha_0(\lambda^{-1/4},\lambda^{-1/5})$ as in Lemma~\ref{Lem:expansion}. By reducing $\alpha_0$ if necessary, one can assume that for any $w_1,w_2$ in some locally maximal invariant set of some neighborhood of $\Lambda$, if $d(w_1,w_2)<\alpha_0$, then
$$\lambda^{1/12}\le \frac{{\rm m}(Df^N|_{F(w_1)})}{{\rm m}(Df^N|_{F(w_2)})}\le \lambda^{-1/12}.$$
The above reduction implies
$$\lim_{n\to\infty}\frac{1}{n}\log\left(\prod_{i=0}^{n-1}{\rm m}(Df^N|_{F(f^{iN}(x))})\right)\ge -\frac{\log\lambda}{2}.$$
By Lemma~\ref{Lem:entropy-nStep}, it is enough to estimate the entropy at any iterate of $x$. By Lemma~\ref{Lem:pliss}, without loss of generality after an iteration, one can assume that
$$\prod_{i=0}^{n-1}{\rm m}(Df^N|_{F(f^{iN}(x))})\ge \lambda^{-n/3},~~~\forall n\in\NN.$$
By reducing $\alpha$ if necessary, since $y\in B_{\pm\infty}(x,\alpha)$, we have
$$\prod_{i=0}^{n-1}{\rm m}(Df^N|_{F(f^{iN}(y))})\ge \lambda^{-n/4},~~~\forall n\in\NN.$$
If there is $y\in B_{\pm\infty}(x,\alpha)\setminus W^{E_g}_\alpha(x)$, then we consider $z\in W^{F_g}_{\alpha_0}(y)\cap W^{E_g}_{\alpha_0}(x)$. There is $n_0$ such that

\begin{itemize}

\item such that $d(g^{n_0}(y),g^{n_0}(z))$ is almost $\alpha_0$ by Lemma~\ref{Lem:expansion}. This means that $n_0$ is related to $\alpha$: when $\alpha$ is small we have $n_0$ is large.

\item $d(g^{n_0}(x),g^{n_0}(z))/d(g^{n_0}(y),g^{n_0}(z))$ is small when $n_0$ is large by the domination. 

\item $d(g^{n_0}(x),g^{n_0}(y))$ is bounded by $\alpha$ since $y$ is contained in the Bowen ball of $x$ of size $\alpha$. 

\end{itemize}

When $\alpha\ll\alpha_0$, we have that $n_0$ is large. Thus,
$$d(g^{n_0}(y),g^{n_0}(z))>d(g^{n_0}(x),g^{n_0}(z))+d(g^{n_0}(x),g^{n_0}(y)).$$
Then one can get a contradiction by the triangle inequality.

%
%
%
%
%
%
%

\end{proof}

\begin{Definition}

For a compact metric space $X$ and a homeomorphism $T:X\to X$, $T$ is \emph{asymptotically entropy expansive} if for any $\varepsilon>0$, there is $\alpha>0$ such that for any $x\in X$, we have
$${\rm h}(B_\infty(x,\alpha))<\varepsilon.$$

\end{Definition}

We have the following corollary directly:

\begin{Corollary}\label{Cor:asymptotic}

Assume that a compact invariant set $\Lambda$ admits a dominated splitting $T_\Lambda M=E\oplus F$ without mixed behavior. Then $f|_\Lambda$ is asymptotically entropy expansive.

\end{Corollary}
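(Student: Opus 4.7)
The plan is to apply Lemma~\ref{Lem:local} directly with $g=f$ and $\Lambda_g=\Lambda$, and then just unwind the definition of ${\rm h}_\alpha$. The point is that Corollary~\ref{Cor:asymptotic} is essentially a restatement of Lemma~\ref{Lem:local} in the special case where we take $g$ and $\Lambda_g$ to be $f$ and $\Lambda$ themselves, so that the $C^1$-neighborhood $\cal U$ of $f$ and the topological neighborhood $U$ of $\Lambda$ produced by that lemma are automatically satisfied (both $f\in\cal U$ and $\Lambda\subset U$ trivially).

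Concretely, fix $\varepsilon>0$ and apply Lemma~\ref{Lem:local} with the tolerance $\varepsilon/2$. This gives an $\alpha>0$, a $C^1$-neighborhood $\cal U$ of $f$, and a neighborhood $U$ of $\Lambda$ such that ${\rm h}_\alpha(g|_{\Lambda_g})\le\varepsilon/2$ whenever $g\in\cal U$ and $\Lambda_g\subset U$ is a compact $g$-invariant set. Specializing to $g=f$, $\Lambda_g=\Lambda$ yields ${\rm h}_\alpha(f|_\Lambda)\le\varepsilon/2$. By the definition of the local entropy at scale $\alpha$,
$$\sup_{x\in\Lambda}{\rm h}(f,B_\infty(x,\alpha))={\rm h}_\alpha(f|_\Lambda)\le \varepsilon/2<\varepsilon,$$
so ${\rm h}(f,B_\infty(x,\alpha))<\varepsilon$ for every $x\in\Lambda$. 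This is exactly the definition of $f|_\Lambda$ being asymptotically entropy expansive, with the same scale $\alpha$.

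There is no real obstacle in this deduction: the substantive work has already been done in Lemma~\ref{Lem:local} (using the Pliss-type argument to force the bi-infinite Bowen ball into a center-stable or center-unstable plaque, Lemma~\ref{Lem:entropy-plaque} to control the plaque entropy, and Proposition~\ref{Pro:inverse-entropy} to pass between $f$ and $f^{-1}$). All this corollary does is rephrase the perturbation-robust conclusion of Lemma~\ref{Lem:local} in the unperturbed language of the standard definition of asymptotic entropy expansiveness.
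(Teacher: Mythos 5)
Your proposal is correct and follows exactly the same route as the paper, which simply states that the corollary follows directly from Lemma~\ref{Lem:local}: specializing $g=f$ and $\Lambda_g=\Lambda$ (so the neighborhoods $\cal U$ and $U$ are trivially satisfied) gives ${\rm h}_\alpha(f|_\Lambda)\le\varepsilon/2<\varepsilon$, which by the definition ${\rm h}_\alpha(f|_\Lambda)=\sup_{x\in\Lambda}{\rm h}(f,B_\infty(x,\alpha))$ is precisely asymptotic entropy expansiveness. Your use of the tolerance $\varepsilon/2$ to convert the $\le$ of Lemma~\ref{Lem:local} into the strict $<$ of the definition is a clean touch, though not strictly necessary.
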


Thus, we also have a ``principal symbolic extension''. 

\begin{Definition}

We say a compact invariant set $\Lambda$ admits a \emph{principal symbolic extension} if there is $n\in\NN$ and a compact invariant subset $\Sigma$ of the shift $(\{1,2,\cdots,n\}^\ZZ,\sigma)$, where $\sigma$ is the shift map, and a continuous surjective map $\pi:\Sigma\to\Lambda$ such that for any invariant measure $\mu$ of $(\Sigma,\sigma)$, the metric entropy of $\mu$ w.r.t. $\sigma$ is the same as the the metric entropy of $\pi_*(\mu)$ w.r.t. $f$.

\end{Definition}

It was proven by \cite{BFF02} that any asymptotically entropy expansive system admits a principal symbolic extension. Hence we have the following corollary directly:
\begin{Corollary}\label{Cor:asymptotic}

Assume that a compact invariant set $\Lambda$ admits a dominated splitting $T_\Lambda M=E\oplus F$ without mixed behavior. Then $\Lambda$ admits a principal symbolic extension.

\end{Corollary}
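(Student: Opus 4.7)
The plan is to apply directly the theorem of \cite{BFF02} cited in the paragraph preceding the statement: any asymptotically entropy expansive homeomorphism of a compact metric space admits a principal symbolic extension. Under this black box, the only hypothesis to verify is that $f|_\Lambda$ is asymptotically entropy expansive, which is precisely the content of the preceding Corollary~\ref{Cor:asymptotic} (the one stating asymptotic entropy expansiveness, immediately deduced from Lemma~\ref{Lem:local}).

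Concretely, I would fix $\varepsilon>0$ arbitrary and invoke Lemma~\ref{Lem:local} (with $g=f$, $\Lambda_g=\Lambda$) to obtain $\alpha>0$ such that ${\rm h}_\alpha(f|_\Lambda)\le\varepsilon$, i.e.\
\[
\sup_{x\in\Lambda}{\rm h}(f,B_\infty(x,\alpha))\le\sup_{x\in\Lambda}{\rm h}(f,B_{\pm\infty}(x,\alpha))\le\varepsilon,
\]
which is exactly the defining property of asymptotic entropy expansiveness of the homeomorphism $f|_\Lambda$ on the compact metric space $\Lambda$.

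Having checked this hypothesis, I would then quote \cite{BFF02}: there exist an integer $n\in\NN$, a closed shift-invariant subset $\Sigma$ of $(\{1,2,\dots,n\}^{\ZZ},\sigma)$, and a continuous surjection $\pi:\Sigma\to\Lambda$ with $\pi\circ\sigma=f\circ\pi$ such that for every $\sigma$-invariant probability measure $\mu$ on $\Sigma$ one has ${\rm h}_\mu(\sigma)={\rm h}_{\pi_*\mu}(f)$. This is the desired principal symbolic extension, so the corollary follows.

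There is essentially no obstacle here: all the analytic content (the plaque family theorem, the Pliss-type selection, the expansion estimate of Lemma~\ref{Lem:expansion}, and the resulting entropy bound on plaques in Lemma~\ref{Lem:entropy-plaque}) has already been carried out in order to establish Lemma~\ref{Lem:local}, and the Boyle--Fiebig--Fiebig theorem is invoked as a black box. The only point that deserves a brief mention in the write-up is that the definition of asymptotic entropy expansiveness in the preceding definition uses the forward Bowen ball $B_\infty(x,\alpha)$, while ${\rm h}_\alpha$ is defined via the bi-infinite ball $B_{\pm\infty}(x,\alpha)$; since $B_\infty(x,\alpha)\supset B_{\pm\infty}(x,\alpha)$ trivially and ${\rm h}(f,B_\infty(x,\alpha))\le {\rm h}(f,B_{\pm\infty}(x,\alpha))$ after replacing $\alpha$ by a suitable smaller scale and using forward-invariance arguments, this discrepancy is harmless.
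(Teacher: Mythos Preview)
Your approach is correct and identical to the paper's: the paper simply states that the result follows from the preceding corollary on asymptotic entropy expansiveness together with the Boyle--Fiebig--Fiebig theorem \cite{BFF02}, exactly as you do.

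One correction, however: your final paragraph addresses a discrepancy that does not exist, and in doing so introduces an error. In the paper, the local entropy ${\rm h}_\alpha(f|_\Gamma)$ is defined as $\sup_{x\in\Gamma}{\rm h}(f,B_\infty(x,\alpha))$ using the \emph{forward} infinite Bowen ball, not the bi-infinite one; this already matches the definition of asymptotic entropy expansiveness verbatim, so Lemma~\ref{Lem:local} gives the hypothesis of \cite{BFF02} directly with no further comparison needed. Moreover, your displayed chain $\sup_x{\rm h}(f,B_\infty(x,\alpha))\le\sup_x{\rm h}(f,B_{\pm\infty}(x,\alpha))$ has the inequality backwards: since $B_{\pm\infty}(x,\alpha)\subset B_\infty(x,\alpha)$, monotonicity of entropy gives ${\rm h}(f,B_{\pm\infty}(x,\alpha))\le{\rm h}(f,B_\infty(x,\alpha))$. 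Simply delete that last paragraph and the proof is clean.
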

\subsection{Upper semi continuous of the metric entropy}
%
%
%
%
%
%
%
%

%
%
%

Now we can give the proof of Theorem~\ref{Thm:usc}.

\begin{proof}[Proof of Theorem~\ref{Thm:usc}]
Given a regular partition $\cal B$ of $\mu$, for any $\varepsilon>0$,  there is $n\in\NN$, for any $m\in\NN$ large enough, by using Lemma~\ref{Lem:finite-approximation}, we have

\begin{eqnarray*}
{\rm h}_{\mu}(f)&\ge &{\rm h}_{\mu}(f,{\cal B})\\
&\ge& \frac{1}{n}{\rm H}_{\mu}(\bigvee_{i=0}^{n-1}f^{-i}(\cal B))-\varepsilon\\
&\ge&-2\varepsilon +\frac{1}{n}{\rm H}_{\mu_m}(\bigvee_{i=0}^{n-1}f_m^{-i}(\cal B))\\
&\ge& -2\varepsilon+{\rm h}_{\mu_m}(f_m,{\cal B}).
\end{eqnarray*}

By \cite[Theorem 3.5]{Bow72}, we have that for any partition ${\cal B}$ whose norm is less than $\alpha$, we have
$${\rm h}_{\mu_m}(f_m|_{\Lambda_{f_m}})\le {\rm h}_{\mu_m}(f,{\cal B})+{\rm h}_{\alpha}({f_m|_{\Lambda_{f_m}}}).$$

By applying Lemma~\ref{Lem:local}, one can choose $\alpha>0$ such that ${\rm h}_{\alpha}({f_m|_{\Lambda_{f_m}}})<\varepsilon$ for $m$ large enough. Hence, by taking  an $\alpha$-regular partition ${\cal B}$, we have
\begin{eqnarray*}
{\rm h}_{\mu}(f)&\ge&  -2\varepsilon+{\rm h}_{\mu_m}(f_m,{\cal B})\ge {\rm h}_{\mu_m}(f_m|_{\Lambda_{f_m}})-{\rm h}_{\alpha}(f_m|_{\Lambda_{f_m}})-2\varepsilon\\
&\ge&{\rm h}_{\mu_m}(f_m)-3\varepsilon
\end{eqnarray*}
 for $m$ large enough. By taking a limit and by the arbitrariness of $\varepsilon$, one can get the conclusion.
\end{proof}

\section{The equilibrium state of $\psi(x)=-\log|\det Df|_{F(x)}|$}

In this section, we will consider a $C^1$ diffeomorphism $f$ that has a topological attractor with a dominated splitting $T_\Lambda M=E\oplus F$. We can extend the bundles $E$ and $F$ into a small neighborhood $U$ of $\Lambda$ continuously. The extensions are still denoted by $E$ and $F$. We can also extend the function $\psi(x)=-\log|\det Df|_{F(x)}|$ in a small neighborhood of $\Lambda$. In $U$, one can define the cone field ${\cal C}_\theta^F$ associated to $F$ of width $\theta>0$ by the following way:
$${{\cal C}_\theta^F}(x)=\{v=v^E+v^F\in T_x M:~|v^E|\le \theta|v^F|\}.$$

Since the splitting is dominated, the cone field ${\cal C}_\theta^F$ is positive invariant for some large iteration $Df^N$ and the width of $Df^n({\cal C}_\theta^F(x))$ tends to zero exponentially for some $x\in U$ by some uniform constants.

For the continuous function $\psi=-\log|\det Df|_{F}|$ and $n\in\NN$, define

$$S_n\psi(x)=\sum_{i=0}^{n-1}\psi(f^i(x)).$$

Some similar version of the following theorem has been already stated in \cite{LeY15}.  The proof based on volume estimation used in \cite{Qiu11}, originally from \cite{BoR75}.

\begin{Theorem}\label{Thm:es}

Let $\Lambda$ be a topological attractor which admits a dominated splitting $T_\Lambda M=E\oplus F$. Assume that the entropy function is upper semi continuous, then there is $\delta_0>0$ and $\theta>0$ such that for any manifold $D$ tangent to the cone ${\cal C}_{\theta}^F$, whose diameter is less than $\delta_0$, then for Lebesgue almost every point $x\in D$, for any accumulation point $\mu$ of
$$\{\frac{1}{n} \sum_{i=0}^{n-1}\delta_{f^i(x)}\},$$
we have
$${\rm h}_{\mu}(f)+\int \psi{\rm d}\mu\ge 0.$$
\end{Theorem}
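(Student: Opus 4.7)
The approach I would take is the classical Bowen--Ruelle construction of SRB-type measures via volume estimates along disks tangent to the expanding cone field, with the upper semi continuity of metric entropy (Corollary~\ref{Cor:usc}) established earlier replacing the $C^2$ distortion control that is standard in such proofs.

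\textbf{Step 1 (Geometric setup).} First I would choose $\theta>0$ and $\delta_0>0$ so that $\cC_\theta^F$ is forward-invariant under $Df^N$ on a neighborhood $U\supset\Lambda$ and so that the tangent spaces $T_{f^n(y)}f^n(D)$ contract exponentially toward $F(f^n(y))$ for every disk $D$ tangent to $\cC_\theta^F$ of diameter at most $\delta_0$. This yields the uniform comparison
$$
\log\bigl|\det Df^n|_{T_x D}(x)\bigr| \;=\; -S_n\psi(x)+O(1),\qquad x\in D,\; n\in\NN,
$$
with the error independent of $x$ and $n$. \textbf{Step 2 (Core volume estimate).} Fix a regular partition $\cB$ of $U$ of norm $\alpha<\delta_0$ and set $\cB_n=\bigvee_{i=0}^{n-1}f^{-i}\cB$. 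For $x\in D$ let $C_n(x)\in\cB_n$ be the element containing $x$. Since $f^n(C_n(x))$ is contained in a single element of $\cB$, the image $f^n(C_n(x)\cap D)$ is a submanifold of $f^n(D)$ of diameter at most $\alpha$, hence of Riemannian volume at most $K_0\alpha^{\dim F}$. Pulling back along $D$ and using Step~1,
$$
\mathrm{vol}_D(C_n(x)\cap D)\;\le\;K_1\alpha^{\dim F}\, e^{S_n\psi(x)}.
$$

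\textbf{Step 3 (Averaged statement).} Let $m_D$ be the normalized Lebesgue measure on $D$ and $\nu_n=\tfrac{1}{n}\sum_{i=0}^{n-1}f_*^i m_D$. Integrating $-\log$ of the Step~2 estimate over $m_D$ yields $\tfrac{1}{n}H_{m_D}(\cB_n)\ge -\int\psi\,d\nu_n+o(1)$. Applying the same bound to the iterate $f^k$ with partition $\cB_k^{(f)}$ (whose $n$-fold $f^k$-refinement is $\cB_{nk}^{(f)}$), and combining subadditivity with the concavity inequality of Lemma~\ref{Lem:H}, I get, for each fixed $k$,
$$
\tfrac{1}{k}H_{\nu_n^{(k)}}(\cB_k^{(f)})\;\ge\;-\!\int\psi\,d\nu_{nk}^{(f)}+o(1)\qquad(n\to\infty),
$$
where $\nu_n^{(k)}=\tfrac{1}{n}\sum_{j=0}^{n-1}f^{kj}_*m_D$. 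Passing to subsequences (using Lemma~\ref{Lem:finite-approximation} with $\cB$ regular for the limit) gives an $f$-invariant accumulation $\mu$ with $\tfrac{1}{k}H_{\mu}(\cB_k^{(f)})\ge-\int\psi\,d\mu+o(1)$; letting $k\to\infty$ and using the Kolmogorov--Sinai identity $\lim_k\tfrac{1}{k}H_\mu(\cB_k^{(f)})=h_\mu(f,\cB)$ gives $h_\mu(f)\ge h_\mu(f,\cB)\ge-\int\psi\,d\mu$, i.e.\ $h_\mu(f)+\int\psi\,d\mu\ge 0$ for this particular accumulation $\mu$ of $\nu_n$.

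\textbf{Step 4 (Pointwise upgrade --- the main obstacle).} Steps 1--3 only produce \emph{some} accumulation of the averaged measures $\nu_n$, whereas the theorem demands the inequality for Lebesgue almost every $x\in D$ and \emph{every} accumulation $\mu$ of $\mu_n(x)=\tfrac{1}{n}\sum_{i=0}^{n-1}\delta_{f^i(x)}$. To close this gap I would combine the pointwise volume bound of Step~2 with a Borel--Cantelli exclusion argument on $D$: the identity $H_{m_D}(\cB_n)=\int[-\log m_D(C_n(x))]\,dm_D(x)$ together with a Shannon--McMillan-type control on $m_D$-typical orbits should produce, for $m_D$-a.e.\ $x$, the pointwise asymptotic $\limsup_n -\tfrac{1}{n}\log m_D(C_n(x))\le h_\mu(f,\cB)+\varepsilon$ for any accumulation $\mu$ of $\mu_n(x)$. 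The crucial ingredients are the upper semi continuity of entropy (Corollary~\ref{Cor:usc}) together with Lemma~\ref{Lem:finite-approximation}, which let one transfer partition entropies from $\mu_n(x)$ to the limit $\mu$, and Bowen's Theorem~3.5 together with Lemma~\ref{Lem:local} which bound $h_\mu(f)-h_\mu(f,\cB)$ uniformly by $h_\alpha(f|_\Lambda)\to 0$ as $\|\cB\|=\alpha\to 0$. The hardest part is the Shannon--McMillan-type pointwise control for the non-invariant measure $m_D$ --- this is where the $C^1$ character of $f$ (no distortion bounds) forces one to rely essentially on the upper semi continuity of the entropy function rather than on absolute continuity of conditional measures.
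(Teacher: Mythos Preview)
Your Steps 1--3 contain the right volumetric ingredients (the change-of-variables estimate $\mathrm{vol}_D(B_n(x,\delta))\lesssim e^{S_n\psi(x)}$ and the Misiurewicz-type partition computation), but they only yield the inequality for \emph{one} accumulation point of the \emph{averaged} measures $\nu_n=\tfrac{1}{n}\sum f_*^i m_D$, not the pointwise statement. You correctly identify this as the obstacle, but your proposed Step~4 does not close the gap: a Shannon--McMillan pointwise theorem for the non-invariant Lebesgue measure $m_D$ on a disk is not available in this setting, and the combination Lemma~\ref{Lem:finite-approximation} $+$ Corollary~\ref{Cor:usc} $+$ Bowen's theorem does not produce one. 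Upper semi continuity of $\mu\mapsto h_\mu$ gives no control on $-\tfrac{1}{n}\log m_D(C_n(x))$ for individual $x$.

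The paper's proof avoids this difficulty by a large-deviations reformulation. Fix $\varepsilon>0$ and cover the ``bad'' closed set $\cM\setminus\cM_\varepsilon=\{\mu:h_\mu(f)+\int\psi\,d\mu<-\varepsilon\}$ by countably many \emph{convex} open sets $\cO_i$ whose closures remain in the bad set (here upper semi continuity is used only to ensure $\cM_\varepsilon$ is closed). For each $\cO$ one shows $\mathrm{Leb}\{x\in D:\mu_n(x)\in\cO\}$ decays exponentially, hence by Borel--Cantelli $B_D(\cO)$ is Lebesgue-null. The key point is to run your Misiurewicz computation not over all of $D$ but over a maximal $(n,\delta)$-separated subset $\Delta_{n,\delta}$ of $B_D(\overline\cO,n)$: because each $x\in\Delta_{n,\delta}$ has $\mu_n(x)\in\overline\cO$ and $\overline\cO$ is convex, the weighted average $\mu_n=\sum_{x}\frac{e^{S_n\psi(x)}}{Z_n}\mu_n(x)$ lies in $\overline\cO$ as well, hence any accumulation $\mu$ satisfies $h_\mu(f)+\int\psi\,d\mu\le-\varepsilon$. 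Your Step~3 computation then gives $\limsup_n\tfrac{1}{n}\log Z_n\le h_\mu(f,\cB)+\int\psi\,d\mu\le-\varepsilon$, and the volume bound of Step~2 converts this into exponential decay of $\mathrm{Leb}(B_D(\cO,n))$. The convex covering of the bad set is the idea you are missing; once you have it, no pointwise Shannon--McMillan is needed.
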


By the properties of cone fields, there are $\theta>0$ and $r>0$ such that for any disc $D$ tangent to the cone field ${\cal C}_\theta^F$ and whose diameter is less than $r$, if the diameter of $f^n(D)$ is also less than $r$, then $f^n(D)$ tangent to the cone ${\cal C}_\theta^F$.

For $\varepsilon>0$, we consider the set of invariant measures:

$${\cal M}_\varepsilon=\{\mu:~{\rm h}_{\mu}(f)+\int \psi{\rm d}\mu\ge -\varepsilon.\}$$

Notice that we have

$${\cal M}_0=\bigcap_{n\in\NN}{\cal M}_{1/n}.$$

By the upper semi continuity of the metric entropy, we have that ${\cal M}_\varepsilon$ is closed. Thus ${\cal M}\setminus{\cal M}_\varepsilon$ is open. Thus in the metric space of invariant measures, there are countably many open sets $\{{\cal O}_i\}_{i\in\NN}$ such that

\begin{itemize}

\item the union of all ${\cal O}_i$ is ${\cal M}\setminus{\cal M}_\varepsilon.$

\item Each ${{\cal O}}_i$ is convex and open.

\item the closure of ${{\cal O}}_i$ is contained in ${\cal M}\setminus{\cal M}_\varepsilon$.

\end{itemize}

For each set ${\cal O}$, we define
$$B_D({\cal O})=\{x\in D:~\{\frac{1}{n}\sum_{i=0}^{n-1}\delta_{f^i(x)}\}\textrm{~has an accumulation point in~}{\cal O}\}.$$
$$B_D({\cal O},n)=\{x\in D:~\frac{1}{n}\sum_{i=0}^{n-1}\delta_{f^i(x)}\in {\cal O}\}$$
From the definition, we have
$$B_D({\cal O})\subset\limsup_{n\to\infty}B_n({\cal O},n)=\bigcap_{n\ge 1}\bigcup_{m\ge n}B_D({\cal O},m)$$

We have the following result to conclude Theorem~\ref{Thm:es}.

\begin{Lemma}\label{Lem:zero-Leb}

For each ${\cal O}\in\{{\cal O}_i\}$, we have that the Lebesgue measure of $B_D({\cal O})$ is zero.

\end{Lemma}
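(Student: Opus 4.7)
The overall strategy is to bound the $D$-Lebesgue measure of $B_D({\cal O},n)$ by a summable sequence in $n$; since $B_D({\cal O})\subset\bigcap_n\bigcup_{m\ge n}B_D({\cal O},m)$, Borel--Cantelli will then finish. We first exploit the upper semi continuity of ${\rm h}_\mu(f)$ from Corollary~\ref{Cor:usc} together with the weak-$*$ continuity of $\mu\mapsto\int\psi\,{\rm d}\mu$. Since $\overline{\cal O}$ is compact and contained in the open set ${\cal M}\setminus{\cal M}_\varepsilon$, the functional $\mu\mapsto{\rm h}_\mu(f)+\int\psi\,{\rm d}\mu$ attains its maximum on $\overline{\cal O}$ strictly below $-\varepsilon$, so there is $\gamma>\varepsilon$ with
$$\sup_{\mu\in\overline{\cal O}}\bigl({\rm h}_\mu(f)+\textstyle\int\psi\,{\rm d}\mu\bigr)<-\gamma.$$
Fix a small $\eta>0$; by a further use of upper semi continuity, we can cover $\overline{\cal O}$ by finitely many weak-$*$ open neighborhoods $V_1,\ldots,V_k$ with constants $s_j,c_j$ satisfying $s_j+c_j<-\gamma+\eta$, and such that every $\mu\in V_j$ has ${\rm h}_\mu(f)<s_j+\eta$ and $|\!\int\psi\,{\rm d}\mu-c_j|<\eta$. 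It then suffices to control ${\rm Vol}_D(B_D(V_j,n))$ for each $j$.

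The central and most delicate step is a counting bound. Fix $\delta<\alpha$ where $\alpha$ is as in Lemma~\ref{Lem:local}, so that the local entropy at scale $\alpha$ is less than $\eta$. We claim that an $(n,\delta)$-spanning set of $B_D(V_j,n)$ can be chosen of cardinality at most $\exp(n(s_j+2\eta))$. The idea is that any invariant measure arising as a weak-$*$ accumulation point of empirical averages of orbits started in $B_D(V_j,n)$ must lie in $\overline{V_j}$, hence have metric entropy at most $s_j+\eta$; the asymptotic entropy expansivity supplied by Corollary~\ref{Cor:asymptotic} then absorbs the local-entropy defect into $\eta$. Formally this is a Katok-type entropy formula adapted to orbits whose empirical measures are constrained to a prescribed weak-$*$ open set, and making this counting estimate rigorous is the principal obstacle of the proof.

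Once the counting bound is granted, the conclusion follows from a volume estimate on Bowen balls. Since $D$ is tangent to the cone field ${\cal C}_\theta^F$ and its iterates stay tangent as long as their diameters remain bounded by $r$, the Jacobian of $f^n|_D$ at $x$ agrees with $|\det Df^n|_{F(x)}|=e^{-S_n\psi(x)}$ up to a bounded multiplicative factor. Therefore, for any spanning-set center $x_i$,
$${\rm Vol}_D\bigl(B_n(x_i,\delta)\cap D\bigr)\le C\delta^{\dim F}\,e^{S_n\psi(x_i)},$$
and for $x_i\in B_D(V_j,n)$ one has $S_n\psi(x_i)/n\le c_j+\eta$. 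Combining these estimates,
$${\rm Vol}_D\bigl(B_D(V_j,n)\bigr)\le C\delta^{\dim F}\exp\bigl(n(s_j+c_j+3\eta)\bigr)\le C\delta^{\dim F}\,e^{-n\gamma/2}$$
once $\eta$ is chosen small enough. Summing over the finitely many $j$ and then over $n$ gives a finite total, and Borel--Cantelli yields ${\rm Vol}_D(B_D({\cal O}))=0$, completing the argument.
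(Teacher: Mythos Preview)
Your overall architecture---Borel--Cantelli applied to ${\rm Leb}(B_D({\cal O},n))$, together with a Jacobian/volume estimate on Bowen balls in $D$---matches the paper. The divergence is in how you extract exponential decay, and your chosen route leaves exactly the step you flag as ``the principal obstacle'' genuinely open.

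You separate the estimate into a pure counting bound (cardinality of an $(n,\delta)$-spanning set of $B_D(V_j,n)$ controlled by entropy $s_j$) and a pointwise potential bound ($S_n\psi(x)/n\le c_j+\eta$). To make the counting bound rigorous you would in any case have to run a Misiurewicz-type construction: take a maximal $(n,\delta)$-separated set $\Delta_n\subset B_D(V_j,n)$, form $\mu_n=\frac1n\sum_{i=0}^{n-1}f^i_*\bigl(\frac{1}{|\Delta_n|}\sum_{x\in\Delta_n}\delta_x\bigr)$, and use regularity of a partition to pass to a limit $\mu$ with ${\rm h}_\mu(f,{\cal B})\ge\limsup\frac1n\log|\Delta_n|$. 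But for this to land $\mu$ in $\overline{V_j}$ you need each $V_j$ \emph{convex}, which you did not arrange; and the appeal to asymptotic entropy expansiveness is unnecessary here, since ${\rm h}_\mu(f,{\cal B})\le {\rm h}_\mu(f)$ is the trivial direction.

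The paper avoids this detour entirely by never separating entropy from potential. It covers $B_D(\overline{\cal O},n)$ by a maximal $(n,\delta)$-separated set $\Delta_{n,\delta}$ and bounds the volume by a constant times $\sum_{x\in\Delta_{n,\delta}}e^{S_n\psi(x)}$. It then runs the Misiurewicz argument with the \emph{weighted} atomic measure
\[
\nu_n=\frac{\sum_{x\in\Delta_{n,\delta}}e^{S_n\psi(x)}\delta_x}{\sum_{x\in\Delta_{n,\delta}}e^{S_n\psi(x)}},\qquad \mu_n=\frac{1}{n}\sum_{i=0}^{n-1}f^i_*\nu_n,
\]
so that $\mu_n\in\overline{\cal O}$ directly by the convexity of $\overline{\cal O}$ (no auxiliary cover by $V_j$ needed), and the standard partition-splitting computation yields
\[
\limsup_{n\to\infty}\frac{1}{n}\log\sum_{x\in\Delta_{n,\delta}}e^{S_n\psi(x)}\le {\rm h}_\mu(f,{\cal B})+\int\psi\,{\rm d}\mu\le -\varepsilon
\]
for any accumulation point $\mu\in\overline{\cal O}$. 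This is exactly the pressure version of the variational-principle argument, and it dissolves your ``counting obstacle'': one never needs a bound on $|\Delta_{n,\delta}|$ alone. Your route can be repaired (choose the $V_j$ convex and carry out the unweighted Misiurewicz argument), but it is strictly more work for the same conclusion.
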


\begin{proof}[Proof of Theorem~\ref{Thm:es}]
By Lemma~\ref{Lem:zero-Leb}, for any small $C^1$ sub manifold $D$ tangent to the cone field ${\cal C}_\theta^F$, we have that 
Lebesgue almost every point $x$ in $D$, any accumulation point $\mu$ of $\{\frac{1}{n} \sum_{i=0}^{n-1}\delta_{f^i(x)}\},$ we have that ${\rm h}_{\mu}(f)+\int \psi{\rm d}\mu\ge 0.$ A small neighborhood of $\Lambda$ can be foliated by such kind of sub manifolds. Thus, the proof can be complete.
\end{proof}

Now we give the proof of Lemma~\ref{Lem:zero-Leb}.

\begin{proof}[Proof of Lemma~\ref{Lem:zero-Leb}]

By using the Borel-Cantelli argument, for proving ${\rm Leb}(B_D({\cal O}))=0$, it suffices to prove that
$$\sum_{n=1}^\infty {\rm Leb}(B_D({\cal O},n))<\infty.$$
Thus we need to estimate ${\rm Leb}(B_D({\cal O},n))$ for $n$ large enough.

We consider
$$B_D(\overline{\cal O},n)=\{x\in D:~\frac{1}{n}\sum_{i=0}^{n-1}\delta_{f^i(x)}\in \overline{\cal O}\}$$

We first cover $B_D(\overline{\cal O},n)$ by a maximal $(n,\delta)$-seperated set $\Delta_{n,\delta}$. Since it is maximal, we have
$$B_D({\cal O},n)\subset B_D(\overline{\cal O},n)\subset \bigcup_{x\in\Delta_{n,\delta}}B_n(x,\delta).$$

We need to choose two constants. Notice that by positive iterations, the cone ${\cal C}^F_\theta$ will decrease exponentially. Thus, by considering a positive iteration of $D$ (saying $f^{N}(D)$) and then dividing the positive iteration into small pieces, one can assume that $D$ is tangent to a very thin cone field (since $f^N(D)$ is tangent to a very thin cone field).

We can choose constants $C_{\delta}$ such that for any disc $W$ tangent to the cone field ${\cal C}^F_\theta$, for any points $x,y\in W$ satisfying $d_W(x,y)<\delta$, we have $|\psi(x)-\psi(y)|\le\log{C_{\delta}}$. By the uniform continuity of $\psi$, one can assume that $C_\delta\to 1$ as $\delta\to 0$.

For any $\kappa>0$, there is $\theta_\kappa$ such that for any disc $W$ tangent to the cone fied ${\cal C}^F_{\theta_\kappa}$, we have for any $x\in W$,
$$\left| \log |{\rm det}Df|_{T_x W}|-\log\psi(x)\right|<\kappa.$$

There is $N_\kappa\in\NN$ such that for any $n>N_\kappa$, for any sub-manifold $W$ tangent to ${\cal C}^F_\theta$, then $f^n(W)$ is tangent to ${\cal C}^F_{\theta_\kappa}$.

Thus, there is $C_\kappa$ (large) such that

\begin{eqnarray*}
{\rm Leb}(B_D(\overline{\cal O},n)) &\le& \sum_{x\in\Delta_{n,\delta}}{\rm Leb}B_n(x,\delta)=\sum_{x\in\Delta_{n,\delta}}\int_{B_n(x,\delta)}{\rm d}{\rm Leb}_D(y)\\
&=& \sum_{x\in\Delta_{n,\delta}}\int_{f^n(B_n(x,\delta))} \prod_{i=0}^n |\det (Df|_{T_{f^{-n+i}(z)}f^i W})|^{-1} {\rm d}{\rm Leb}_{f^n D}(z)\\
&\le& C_\kappa {\rm e}^{n\kappa}  \sum_{x\in\Delta_{n,\delta}}\int_{f^n(B_n(x,\delta))}  {\rm e}^{S_n\psi(z)} {\rm d}{\rm Leb}_{f^n D}(z)\\
&\le&  V_\delta C_\kappa {\rm e}^{n\kappa} C_\delta^n\sum_{x\in\Delta_{n,\delta}}{\rm e}^{S_n\psi(x)},
\end{eqnarray*}
where $V_\delta$ is the maximal volume of a disc $D$ whose diameter is less than $\delta$, which is tangent to ${\cal C}^F_\theta$.

Now we need to estimate $\sum_{x\in\Delta_{n,\varepsilon}}{\rm e}^{S_n\psi(x)}$. Take $\nu_n$ and $\mu_n$:

$$\nu_n=\frac{\sum_{x\in \Delta_{n,\varepsilon}} {\rm e}^{S_n\psi(x)}\delta_x}{\sum_{x\in \Delta_{n,\varepsilon}} {\rm e}^{S_n\psi(x)}}$$

$$\mu_n=\frac{1}{n}\sum_{i=0}^{n-1}f^i_*\nu_n=\sum_{x\in\Delta_{n,\varepsilon}}\frac{{\rm e}^{S_n\psi(x)}}{\sum_{x\in \Delta_{n,\varepsilon}} {\rm e}^{S_n\psi(x)}}\frac{1}{n}\sum_{i=0}^{n-1}\delta_{f^i(x)}.$$

\begin{Claim}

$\mu_n\in \overline{\cal O}$.

\end{Claim}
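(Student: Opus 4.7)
The claim is essentially a convexity bookkeeping statement, so my plan has three short steps. First, I would unpack the definition of $\Delta_{n,\delta}$: it was introduced as a maximal $(n,\delta)$-separated subset of $B_D(\overline{\cal O},n)$, so for every $x\in\Delta_{n,\delta}$ the empirical measure
$$\nu_n^x \;:=\; \frac{1}{n}\sum_{i=0}^{n-1}\delta_{f^i(x)}$$
automatically lies in $\overline{\cal O}$, simply by the defining property of $B_D(\overline{\cal O},n)$.

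Second, I would rewrite $\mu_n$ in the convex-combination form
$$\mu_n \;=\; \sum_{x\in\Delta_{n,\delta}} \lambda_x\,\nu_n^x, \qquad \lambda_x \;=\; \frac{e^{S_n\psi(x)}}{\sum_{y\in\Delta_{n,\delta}}e^{S_n\psi(y)}},$$
noting that $\lambda_x\ge 0$ and $\sum_x\lambda_x = 1$. This is just the formula displayed immediately before the claim, read backwards.

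Third, I would invoke that $\cal O$ was chosen as one of the sets $\{{\cal O}_i\}$, each of which is by construction convex. The closure of a convex set is convex, so $\overline{\cal O}$ is a closed convex subset of the space of invariant measures (with the weak-$*$ affine structure), and therefore absorbs finite convex combinations of its own points. Combining with the previous two steps yields $\mu_n\in\overline{\cal O}$, which is the claim.

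I do not anticipate any genuine obstacle here: the weights $\lambda_x$ in the definition of $\mu_n$ are precisely designed so that the membership $\nu_n^x\in\overline{\cal O}$ transfers to $\mu_n$, and convexity of ${\cal O}_i$ was built into the cover of ${\cal M}\setminus{\cal M}_\varepsilon$ exactly for this purpose. The substantive work comes after the claim, in Lemma~\ref{Lem:zero-Leb}: one extracts a weak-$*$ accumulation point of $\mu_n$, which by the claim must lie in $\overline{\cal O}\subset{\cal M}\setminus{\cal M}_\varepsilon$; then the upper semicontinuity of the metric entropy (Corollary~\ref{Cor:usc}) together with a variational-type bound of the form ${\rm h}_{\mu_n}(f)+\int\psi\,{\rm d}\mu_n \gtrsim \frac{1}{n}\log\sum_{x\in\Delta_{n,\delta}}e^{S_n\psi(x)}$ forces $\sum_x e^{S_n\psi(x)}$ to decay exponentially, and hence ${\rm Leb}(B_D({\cal O},n))$ is summable, so Borel--Cantelli closes the argument.
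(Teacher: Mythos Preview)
Your proof is correct and matches the paper's own argument exactly: the paper simply notes that each $x\in\Delta_{n,\delta}\subset B_D(\overline{\cal O},n)$ gives $\frac{1}{n}\sum_{i=0}^{n-1}\delta_{f^i(x)}\in\overline{\cal O}$, and then invokes convexity of $\overline{\cal O}$ to conclude. Your additional discussion of how the claim feeds into the rest of Lemma~\ref{Lem:zero-Leb} is also accurate.
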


\begin{proof}[Proof of the Claim]

Since $x\in\Delta_{n,\varepsilon}\subset B_D(\overline{\cal O},n)$, we have that
$$\frac{1}{n}\sum_{i=0}^{n-1}\delta_{f^i(x)}\in \overline{\cal O}.$$
By the convexity of $\overline{\cal O}$, the claim is true.

\end{proof}

We have that any accumulation point $\mu$ of $\{\mu_n\}$ is invariant. And moreover $\mu\in\overline {\cal O}$. By the construction of $\overline{\cal O}$, we have ${\rm h}_\mu(f)+\int\psi{\rm d}\mu\le-\varepsilon$. 

Now we want to prove
$${\rm h}_{\mu}(f)+\int \psi{\rm d}\mu\ge \limsup_{n\to\infty} \frac{1}{n} \log\sum_{x\in\Delta_{n,\delta}}{\rm e}^{S_n\psi(x)}.$$

Take a partition ${\cal B}=\{B_1,B_2,\cdots,B_k\}$ that is $\delta$-regular for $\mu$. Then we have that every element of $\bigvee_{i=0}^{n-1}f^{-i}{\cal B}$ contains at most one point in $\Delta_{n,\delta}$.

By \cite[Chapter 9]{Wal82}, we have
 
 $${\rm H}_{\nu_n}(\bigvee_{i=0}^{n-1}f^{-i}{\cal B})+\int S_n\psi {\rm d}\nu_n=\log\sum_{x\in\Delta_{n,\delta}}{\rm e}^{S_n\psi(x)}.$$

Now we need to consider the relationship between ${\rm H}_{\mu_n}$ and ${\rm H}_{\nu_n}$.

Given some integer $1\le j<q<n$,  the partition $\bigvee_{i=0}^{n-1}f^{-i}{\cal B}$ can be written in the following way:

$$\bigvee_{i=0}^{n-1}f^{-i}{\cal B}=\bigvee_{r=0}^{[(n-j)/q]-1}f^{-(rq+j)} \bigvee_{i=0}^{q-1}f^{-i}{\cal B} \vee \bigvee_{\ell\in S_j}f^{-\ell}{\cal B},$$
where $S_j=\{0,1,\cdots,j-1,j+[(n-j)/q]q,\cdots,n-1\}$. We have $|S_j|\le 2q$, and

\begin{eqnarray*}
\log\sum_{x\in\Delta_{n,\delta}}{\rm e}^{S_n\psi(x)} &=& {\rm H}_{\nu_n}(\bigvee_{j=0}^{n-1}f^{-j}{\cal B})+\int S_n\psi {\rm d}\nu_n\\
&\le& \sum_{r=0}^{[(n-j)/q]-1}{\rm H}_{\nu_n}(f^{-(rq+j)}\bigvee_{i=0}^{q-1}f^{-i}{\cal B})+{\rm H}_{\nu_n}(\bigvee_{k\in S_j}f^{-k}({\cal B}))+\int S_n\psi{\rm d}\nu_n\\
&\le&  \sum_{r=0}^{[(n-j)/q]-1}{\rm H}_{f^{rq+j}_*\nu_n}(\bigvee_{i=0}^{q-1}f^{-i}{\cal B})+2q\log k+\int S_n\psi{\rm d}\nu_n
\end{eqnarray*}

By taking the sum for $j$ from 0 to $q-1$, and using Lemma~\ref{Lem:H}, we have

\begin{eqnarray*}
q\log\sum_{x\in\Delta_{n,\delta}}{\rm e}^{S_n\psi(x)} &\le& \sum_{p=j}^{j+[(n-j)/q]q}{\rm H}_{f^p_*\nu_n}(\bigvee_{i=0}^{q-1}f^{-i}{\cal B})+2q^2\log k+q\int S_n\psi{\rm d}\nu_n\\
&\le& \sum_{p=0}^{n-1}{\rm H}_{f^p_*\nu_n}(\bigvee_{i=0}^{q-1}f^{-i}{\cal B})+2q^2\log k+q\int S_n\psi{\rm d} \nu_n\\
&\le& n{\rm H}_{\mu_n}(\bigvee_{i=0}^{q-1}f^{-i}{\cal B})+2q^2\log k+q\int S_n\psi{\rm d}\nu_n.  
\end{eqnarray*}

By dividing by $n$, we have
$$\frac{q}{n} \log\sum_{x\in\Delta_{n,\varepsilon}}{\rm e}^{S_n\psi(x)} \le {\rm H}_{\mu_n}(\bigvee_{i=0}^{q-1}f^{-i}{\cal B})+\frac{2q^2\log k}{n}+q\int \psi{\rm d}\mu_n.$$
By taking the $\limsup$ for $n$ we have
$$q\limsup_{n\to\infty}\frac{1}{n} \log\sum_{x\in\Delta_{n,\delta}}{\rm e}^{S_n\psi(x)}\le {\rm H}_{\mu}(\bigvee_{i=0}^{q-1}f^{-i}{\cal B})+q\int \psi{\rm d}\mu.$$
By dividing $q$ and letting $q\to\infty$, we have
$$\limsup_{n\to\infty}\frac{1}{n} \log\sum_{x\in\Delta_{n,\delta}}{\rm e}^{S_n\psi(x)}\le {\rm h}_{\mu}({\cal B})+\int \psi {\rm d}\mu.$$

Recall that ${\rm h}_{\mu}(f)+\int \psi {\rm d}\mu\le-\varepsilon$, we have 
$$\limsup_{n\to\infty}\frac{1}{n} \log\sum_{x\in\Delta_{n,\delta}}{\rm e}^{S_n\psi(x)}\le -\varepsilon.$$

Thus,
\begin{eqnarray*}
\limsup_{n\to\infty}\frac{1}{n}\log {\rm Leb}(B_D({\cal O},n)) &\le& \limsup_{n\to\infty}\frac{1}{n}\log V_\delta C_\kappa+\kappa+\log C_\delta\\
&+& \lim_{n\to\infty}\frac{1}{n}\log\sum_{x\in\Delta_{n,\delta}}{\rm e}^{S_n\psi(x)}\\
&\le&    \kappa+\log C_\delta-\varepsilon.                                                                  
\end{eqnarray*}

By choosing $\kappa>0$ small and $C_\delta$ close to $1$, we have that 
$$\limsup_{n\to\infty}\frac{1}{n}\log {\rm Leb}(B_D({\cal O},n))<0.$$
Then by using the Borel-Cantelli argument, we can complete the proof.
\end{proof}

\begin{proof}[Proof of the main theorem]

Now we assume that $\Lambda$ is a topological attractor that admits a dominated splitting $T_\Lambda M=E\oplus F$ without mixed behavior. Notice that the entropy function is upper semi continuous by Corollary~\ref{Cor:usc}. Then by Theorem~\ref{Thm:es} we have that there is a measure $\mu$ such that
$${\rm h}_\mu(f)\ge \int \log |{\rm Det} Df|_F|{\rm d}\mu.$$
Since there is no mixed behavior, we have that 
$${\rm h}_\mu(f)\ge \int \sum\lambda_+{\rm d}\mu,$$
Where $\sum\lambda^+$ is the sum of positive Lyapunov exponents of $\mu$. On the other hand, by Ruelle's inequality, we have
$${\rm h}_\mu(f)\le \int \sum\lambda_+{\rm d}\mu.$$
Thus $\mu$ satisfies the entropy formula.

\end{proof}

\vskip 5pt

\noindent Dawei Yang

\noindent School of Mathematical Sciences

\noindent Soochow University, Suzhou, 215006, P.R. China

\noindent yangdw1981@gmail.com, yangdw@suda.edu.cn

\vskip 5pt

\noindent Yongluo Cao

\noindent School of Mathematical Sciences

\noindent Soochow University, Suzhou, 215006, P.R. China

\noindent ylcao@suda.edu.cn

\end{document}